\title{Fluctuation Bounds for the Restricted Solid-on-Solid Model of Surface Growth}
\author{Timothy Sudijono\footnote{Department of Statistics, Stanford University: 390 Jane Stanford Way, Stanford, CA 94305, USA. Email: \href{mailto:tsudijon@stanford.edu}{\texttt{tsudijon@stanford.edu}} } }
\date{\today}
\begin{document}

\maketitle

\begin{abstract}
The restricted solid-on-solid (RSOS) model is a $(d+1)$-dimensional model of continuous-time surface growth characterized by the constraint that adjacent height differences are bounded by a fixed constant. Though the model is conjectured to belong to the KPZ universality class in dimension one, mathematical progress on the model is very sparse. Motivated by Chatterjee \cite{chatterjee2021superconcentration}, we study basic properties of the model and establish bounds on surface height fluctuations as a function of time. A linear fluctuation bound is proven for all dimensions, and a logarithmic fluctuation lower bound is given for dimension one. The proofs rely on a new characterization of RSOS as a type of first passage percolation on a disordered lattice, which is of independent interest. The logarithmic lower bound is established by showing equality in distribution to the minimum of a certain dual RSOS process.
\end{abstract}

\section{Introduction}

Surface growth models have long been studied in the physics literature as models of forest fire fronts, stain formation, and medium growth. In continuous time, a $d$-dimensional surface growth model is a random function $f(t,x):\bR^+ \times \bb{Z}^d \rightarrow \bb{R}$ which models the surface height of the system. Independent and identically distributed Poisson clocks are assigned to each location in $\bb{Z}^d,$ such that whenever a clock rings, the surface height at that location is updated in some fashion. The Restricted Solid-on-Solid model, introduced in \cite{kim1989growth}, is one such surface growth process.  Whenever a clock at location $x \in \bb{Z}^d$ rings, a block of height one falls and sticks to the surface only if adjacent height differences, after the block falls, are bounded by one. See Figure \ref{fig:RSOS1D} for an example.

The Restricted Solid on Solid (RSOS) model was first introduced in the physics literature as a more realistic model of surface growth by Kim and Kosterlitz \cite{kim1989growth}. Various properties of the model, such as its growth rate and surface roughness were studied numerically. Since then, many papers have studied the model and its variants, with interest driven primarily by the fact that RSOS  is conjectured to belong to the KPZ universality class \cite{park2003universality, kelling2016universality}. Much of this follow-up work has concentrated on providing detailed numerical evidence of this conjecture and related questions, such as the existence of a crossover from random deposition to KPZ behavior \cite{hosseinabadi2010solid, pagnani2013multisurface, chien2004initial, kim2015phase, kelling2016universality}. One question of interest is how the variance of the surface height scales as a function of time. In one dimension, this is believed to be $t^{2/3}.$ For dimensions two and higher, the answer seems to be unclear on account of the extensive numerical simulations done in $\cite{pagnani2013multisurface, kelling2011extremely}$, although earlier works conjectured $t^{1/2}$ for $d=2.$ Other works focus on analytical arguments from physics, for example mean field approximations \cite{barato2007mean} or renormalization group analysis \cite{kloss2012nonperturbative} for the KPZ equation. For further pointers, see the references in \cite{hosseinabadi2010solid}.

Mathematically, the model may be of interest as a generalization of TASEP surface growth to higher dimensions, where adjacent differences may also take values in $\set{-1,0,1},$ or as a variant of stochastic interface models like SOS. To the author's best knowledge, the RSOS surface growth model is distinct from TASEP even in one dimension. Mathematical results on RSOS appear to be few. \cite{chatterjee2021superconcentration} applies a general framework for \textit{discrete-time}, synchronous surface growth driven by Gaussian noise to prove superconcentration in a variant of the RSOS model, and appears to be the first result on rigorous fluctuation upper bounds for any variant of the model, in discrete time. In the same framework, a deterministic version of the model was analyzed in \cite{chatterjee2022convergence}.  Work by Savu \cite{savu2019conserved, savu2004hydrodynamic} studies a continuum RSOS in the hydrodynamic limit, as well as a conserved variant of the model by mean field approximation.

\begin{figure}
    \centering
    \begin{tikzpicture}[scale = 0.8]

    \draw[color=black, fill = lavendergray] (0,1) rectangle ++(1,1);
    \draw[color=black, fill = lavendergray] (1,1) rectangle ++(1,1);
    \draw[color=black, fill= lavendergray] (1,2) rectangle ++(1,1);
    \draw[color=black,fill = lavendergray] (2,1) rectangle ++(1,1);
    \draw[color=black, fill = lavendergray] (3,1) rectangle ++(1,1);
    
    \draw[color=black, fill = applegreen] (0,5) rectangle ++(1,1);
    \draw[color=black, fill = violet] (2,5) rectangle ++(1,1);
    \draw[color=black, fill = bananayellow] (4,5) rectangle ++(1,1);
    
    \draw [-to](0.5,5) -- (0.5,4.5);
    \draw [-to](2.5,5) -- (2.5,4.5);
    \draw [-to](4.5,5) -- (4.5,4.5);
    
    \draw (-1,1) -- (6,1);
    \end{tikzpicture}
    \hspace{2mm}
    \begin{tikzpicture}[scale = 0.8]
    
    \draw[color=black, fill = lavendergray] (0,1) rectangle ++(1,1);
    \draw[color=black, fill = lavendergray] (1,1) rectangle ++(1,1);
    \draw[color=black, fill= lavendergray] (1,2) rectangle ++(1,1);
    \draw[color=black,fill = lavendergray] (2,1) rectangle ++(1,1);
    \draw[color=black, fill = lavendergray] (3,1) rectangle ++(1,1);
    
    \draw[color=black, fill = violet] (2,2) rectangle ++(1,1);
    \draw[color=black, fill = bananayellow] (4,1) rectangle ++(1,1);
    
    \draw (-1,1) -- (6,1);
    \end{tikzpicture}
    
    \caption{Example of surface dynamics for the RSOS model when the surface height is updated. Blocks fall and stick to the surface only if adjacent height differences remain bounded by one.}
    \label{fig:RSOS1D}
\end{figure}
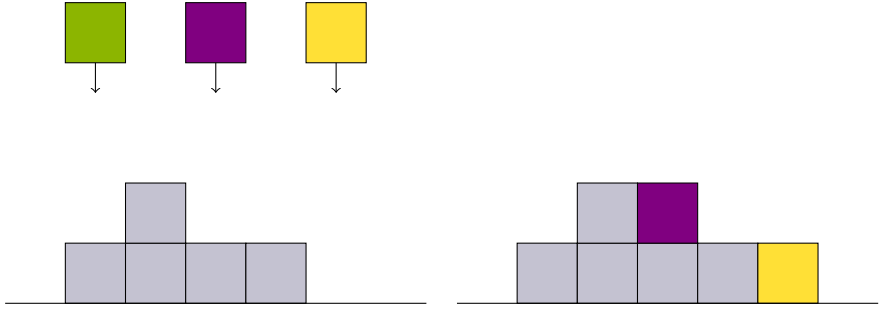

The main contribution of this article is to provide simple bounds on surface height fluctuations of the RSOS model. The primary observation made is that RSOS admits a pathwise representation in terms of paths on a disordered lattice. The space-time locations of the clock rings can be seen as the vertices of a graph contained in $\bR^+ \times \bb{Z}^d,$ sometimes called the Harris construction \cite{ferrari2006harness}. The surface height of the RSOS model can then be expressed as the minimum length of a path on this graph. In this sense, we may interpret RSOS as a type of point-to-line first passage percolation (FPP). These details are provided in Sections \ref{sec:continuoustime_surfacegrowth_prelim}, \ref{sec:rsos}. 

Combined with analysis on the growth rate of the process, the minimal path representation of RSOS is used to prove results on surface height fluctuations in Section \ref{sec:influenceofupdates}. This results in concentration of the RSOS surface height $f(t,x)$ at time $t,$ location $x$:
\[
\Var f(t,x) \leq t,
\]
in all dimensions. To our knowledge, this is the first rigorous bound for the variance of the RSOS model in continuous time, beyond the trivial bound of $O(t^2).$ In one dimension, we show a logarithmic lower bound on the variance:
\[
\liminf \frac{\Var f(t,x)}{\log t} > 0.
\]
The logarithmic lower bound is based on the analysis of a \textit{dual} RSOS process, which is an RSOS process that starts from the well-shaped initial configuration $|x|$. It turns out that the distribution of $f(t,0)$ is equal to that of the minimum of the dual RSOS process at time $t$. The dual process also has a limiting growth rate, which we need to analyze to prove the log lower bound. The overall proof strategy closely mirrors \cite{penrose2008growth}, where a similar lower bound was established for ballistic deposition. We work towards this lower bound in Sections \ref{sec:dual_rep},\ref{sec:dual_growth_rate},\ref{sec:lower_bound}.

\section{Preliminaries on Continuous Time Surface Growth Models}
\label{sec:continuoustime_surfacegrowth_prelim}

Throughout the rest of the paper, fix the following notation.

\begin{enumerate}

    \item $\cl{N}_0$ is the set $\set{\pm e_i}_{i=1}^d \cup \set{0}$ of nearest neighbors to the origin, including the origin itself. Denote $\cl{N} = \set{\pm e_i}_{i=1}^d$ as the set of nearest neighbors without the origin. 
    \item An \textit{update} $\bu$ is a pair $(t,x) \in \bR^+ \times \bR^d$ which denotes the location of a Poisson clock ring in the model. Let $\cl{U}$ denote the Poisson point process driving the surface growth. Informally, the elements of $\cl{U}$ are called updates. Let $\cl{U}_x$ denote the updates which occur over the location $x \in \bb{Z}^d.$
    \item If $\cl{C}$ is a general subset of $\bR_+ \times \bb{Z}^d$ denote by $\pi_x(\cl{C})$ the projection onto $\bb{Z}^d$, representing the locations of the space-time points in $\cl{C}.$ Further, $\pi_t(\cl{C})$ represents the projection onto $\bR_+$, denoting the times of the points in $\cl{C}.$  
    \item $f(t,x)$ is the height of a growing surface. As shorthand, given an update $\bu = (t,x)$ denote by $f(\bu)$ the height of the surface at $(t,x),$ e.g. $f(t,x).$ Let $f(t^-,x)$ denote the left limit 
    \[
    \lim_{ h \downarrow 0} f(t - h,x).
    \]
    Speaking informally, $t^-$ will denote a time infinitesimally before $t$ such that no updates occur in between.
\end{enumerate}

As we will only consider continuous-time surface growth models with integer valued heights in this article, we will often just write \textit{surface growth model} to represent this setting. 

\begin{definition}[Continuous-time surface growth model]
\label{def:surfacegrowth}
A continuous time surface growth model is a function $f(t,x): \bR_+ \times \bb{Z}^d \rightarrow \bb{Z}$ representing the heights of the process at time $t,$ equipped with independent Poisson clocks over each location in $\bb{Z}^d$, and a \textit{driving function} $\phi$, which is a function $\phi: \bb{Z}^{2d+1} \rightarrow \bb{Z}.$ The system evolves as follows: when a clock rings at time $t$, over location $x$, update the value of $f$ by
\[
f(t,x) = \phi( \left(f(t^-,x+n)_{n \in \cl{N}_0} \right)).
\]
\end{definition}

One can also view the surface height $f(t,x)$ as a function of the clock rings, $\cl{U}$ viewed as a Poisson point process on $\bR^+ \times \bb{Z}^d.$ The realization of the clock rings turns out to be a useful object in our study. Throughout this article, we will interchangeably refer to the updates $\cl{U}$ as the \textbf{random (Poisson) lattice}, to emphasize its geometric structure; in the literature it is also known as the Harris graphical construction \cite{ferrari2006harness}. 

The geometric presentation of the updates $\cl{U}$ is useful because one can define a type of random walk on the Poisson lattice. This is similar to random walk representations introduced in \cite{chatterjee2021superconcentration, comets2022scaling, cannizzaro2020brownian} . An analogous construction in continuous time is useful here. See Figure \ref{fig:poissonlattice} for an illustration; horizontal lines segments are drawn at each point on the random lattice to emphasize that lattice path can move laterally upon encountering an update.

\begin{definition}[Path on the random lattice]
\label{def:latticepath}
A \textit{lattice path} ending at $(t,x) \in \bR^+ \times \bb{Z}^d$ is a right continuous function $\gamma(s):[0,t] \rightarrow \bb{Z}^d$, ending at $\gamma(t) = x$, which is piecewise constant and changes value only when $(s,\gamma(s))$ is an update; in this case, $\gamma(s) = \gamma(s^-) + n$ for some $n \in \cl{N}_0.$

We will say a space time point $\mathbf{x} = (t_0,x_0)$ belongs to a path $\gamma,$ denoted by $\mathbf{x} \in \gamma$, if the graph of $\gamma$ goes through $\mathbf{x}$. That is, there exists $s$ such that $(s,\gamma(s)) = (t_0,x_0).$
\end{definition}

\begin{figure}
    \centering
    \includegraphics[width = 0.8\textwidth]{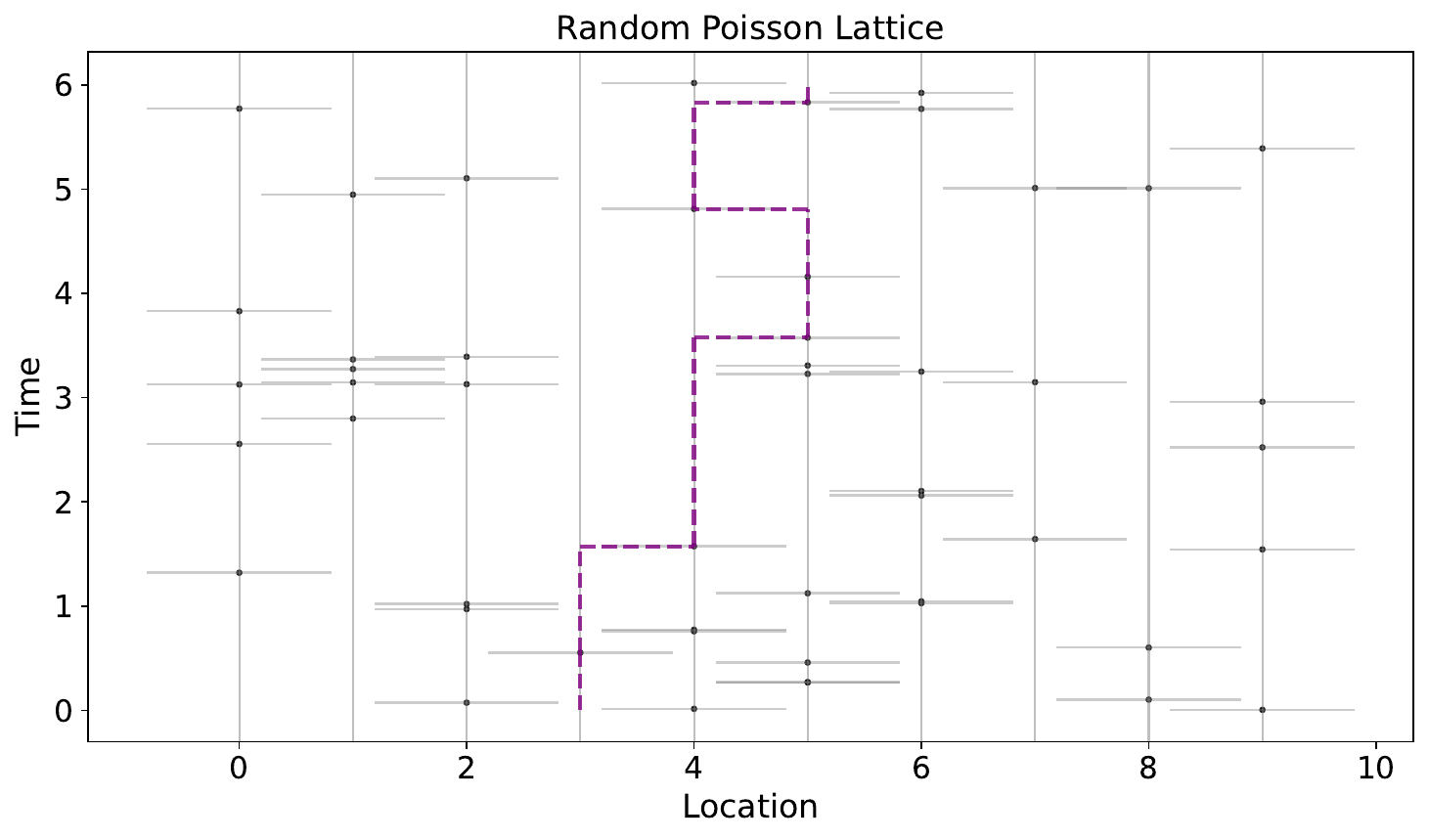}
    \caption{A random Poisson lattice, with an example lattice path dotted in purple. The lattice path can move to neighboring locations only when it encounters an update. It may also stay at the current location.}
    \label{fig:poissonlattice}
\end{figure}

The notion of a path on the random lattice induces a total order on every space-time point in $\bR_+ \times \bb{Z}^d$, called the \textbf{lattice depth}. It can be thought of as a last passage time on the random lattice. The notion is useful in some induction arguments considered later on.

\begin{definition}[Lattice depth of an update]
\label{def:updatedepth}
Fix a random lattice $\cl{U},$ and let $0 < s < t_0$. Define the lattice depth of a space-time point $\bx = (t_0,x_0)$ until time $s$, denoted $D_s(\bx)$, as the supremum of the number of updates $\mathbf{u} = (u_t, u_x)$ such that $\mathbf{u} \in \gamma$ and $u_t \in (s,t_0].$ The supremum is taken over all lattice paths which start at time $s$ and end at $\bx$. For any space-time points before or at time $s$ define the lattice depth as zero.
\end{definition} 


Consider an update $\bu = (t,x)$ with lattice depth until time $s$ given by $D_s(\bu) = d.$ A simple proposition shows that any update at an adjacent location $x+n$ for $n \in \cl{N}$ at a previous time has smaller depth. This will be key in using this ordering for induction arguments.

\begin{proposition}
Consider an update $\bu = (t,x)$ with  finite lattice depth $d.$ Then any update $\bu' = (t',x+n)$ for $n \in \cl{N}_0$ and $t' \in [s,t)$ satisfies
\[
D_s(\bu') < d.
\]
\end{proposition}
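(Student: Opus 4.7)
The plan is to prove the proposition by constructing, from any lattice path starting at $\bu'$, a strictly longer lattice path starting at $\bu$. This will give $D_s(\bu) \geq D_s(\bu') + 1$, whence $D_s(\bu') < d$.

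Concretely, fix a lattice path $\gamma'$ starting at $\bu' = (t', x+e)$, ending at time $s$, that visits $k$ updates strictly before $t'$. I would splice together a path $\gamma$ starting at $\bu = (t, x)$ and ending at $s$ as follows: on $[s, t']$ put $\gamma \equiv \gamma'$; on $(t', t)$ put $\gamma \equiv x + e$, which is allowed since at any intermediate updates $(r, x+e)$ with $r \in (t', t)$ the path is merely permitted, not required, to change value; and finally set $\gamma(t) = x$.

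The verification against Definition \ref{def:latticepath} reduces to the jump at time $t$: since $(t, \gamma(t)) = \bu \in \cl{U}$, a jump is permitted there, and $\gamma(t) - \gamma(t^-) = x - (x + e) = -e$, which lies in $\cl{N}_0$ because $e \in \cl{N} \subset \cl{N}_0$. All other jumps of $\gamma$ are jumps of $\gamma'$ on $[s, t']$, which are valid by hypothesis. Counting the updates that $\gamma$ visits strictly before $t$, we have the $k$ updates visited by $\gamma'$ strictly before $t'$, the update $\bu'$ itself at time $t'$, and possibly further updates at location $x + e$ in the open interval $(t', t)$, totalling at least $k + 1$. Taking $\gamma'$ to realize the supremum $k = D_s(\bu')$, which is attained by the almost-sure finiteness of lattice depth alluded to in the preceding paragraph of the paper, yields $D_s(\bu) \geq D_s(\bu') + 1$, hence $D_s(\bu') < d$.

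There is no deep obstacle; the only conceptual point worth flagging is that the starting update $\bu$ itself licenses a ``free'' initial jump at time $t$, via the allowed displacement $\bf{e} = -e \in \cl{N}_0$. This is precisely what lets $\gamma$ sit at $x + e$ throughout $(t', t)$ without needing any intervening update at $x$ to carry it over to $\bu'$. Once this observation is made, the construction and bookkeeping are mechanical.
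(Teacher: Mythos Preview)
Your proof is correct and follows essentially the same approach as the paper: take a maximal-weight path $\gamma'$ from $\bu'$, extend it to a path $\gamma$ from $\bu$ by staying at $x+e$ on $(t',t)$ and using the update $\bu$ to license the jump at time $t$, and observe that $\gamma$ passes through at least one additional update (namely $\bu'$) strictly before $t$. Your write-up is in fact more careful than the paper's, explicitly checking the right-continuity convention and the $\cl{N}_0$-displacement condition at the splice point.
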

\begin{proof}
Consider the path $\gamma'$ which starts at time $s$ and ends at $\bu'$ with the maximal number of updates; the number of such updates is by definition equal to $D_s(\bu').$ The path $\gamma'$ is part of a new path $\gamma,$ which ends at $\bu$, takes value $x+n$ on the time interval $[t',t)$, and agrees with $\gamma'$ on the time interval $[s,t').$ The number of updates on $\gamma$ is at least $D_s(\bu') + 1,$ which establishes $D_s(\bu') < d.$
\end{proof}

\begin{remark}
The lattice depth of any space-time point is actually finite almost surely, and so the supremum over paths in Definition \ref{def:updatedepth} is finite. One can show that $D_0(t,x)$ is itself a surface growth process which satisfies the update rule
\[
D_0(t,x) = 1 + \max_{n \in \cl{N}_0}(D_0(t^-,x+n))
\]
with zero initial condition. This is a ballistic deposition process which is almost surely finite, for all $t,x$. See \cite{penrose2008growth}, Proposition 2.1 and Lemma 4.1. This connection also justifies the intuition that the lattice depth is a type of last passage time.
\end{remark}

\section{The RSOS model of surface growth}
\label{sec:rsos}

In the remainder of this article, we will specialize to the RSOS surface growth model, defined formally as follows.
\begin{definition}
The RSOS model is a surface growth model in the sense of Definition \ref{def:surfacegrowth}, with driving function defined by
\[
\phi\left( (h_n)_{n \in \cl{N}_0} \right) = h_0 + \prod_{n \in \cl{N}} \mathbf{1}\set{h_0 \leq h_n}.
\]
\end{definition}
That is, for any update $\mathbf{u} = (t,x)$, let $f(t,x) = f(t^-,x) + 1$, only if it does not make the differences with adjacent heights greater than one. Otherwise, leave the height unchanged.  Unless otherwise specified, the initial configuration will be the all zero configuration. All subsequent references to a surface growth process with height function $f$ will concern only the RSOS model.

There are two important features about the RSOS model: at all times, neighboring surface heights are bounded by at most one. A simple consequence is that if $f$ denotes the surface height of an RSOS model, then $|f(t,x) - f(t,y)| \leq \norm{x - y}_1$ for all $t$. Secondly, not every update changes the surface height. We distinguish these updates as follows.

\begin{definition}
\label{def:acceptedupdates}

An \textbf{accepted update} on an RSOS surface is an update $\bu = (u_t,u_x) \in \bR^+ \times \bR^d$ which changes the height of the surface:
\[
f(u_t,u_x) = f(u_t^-,u_x) + 1.
\]
Denote the set of all accepted updates by $\cl{A}.$

\end{definition}

\begin{figure}
    \centering
    \includegraphics[width = 0.8\textwidth]{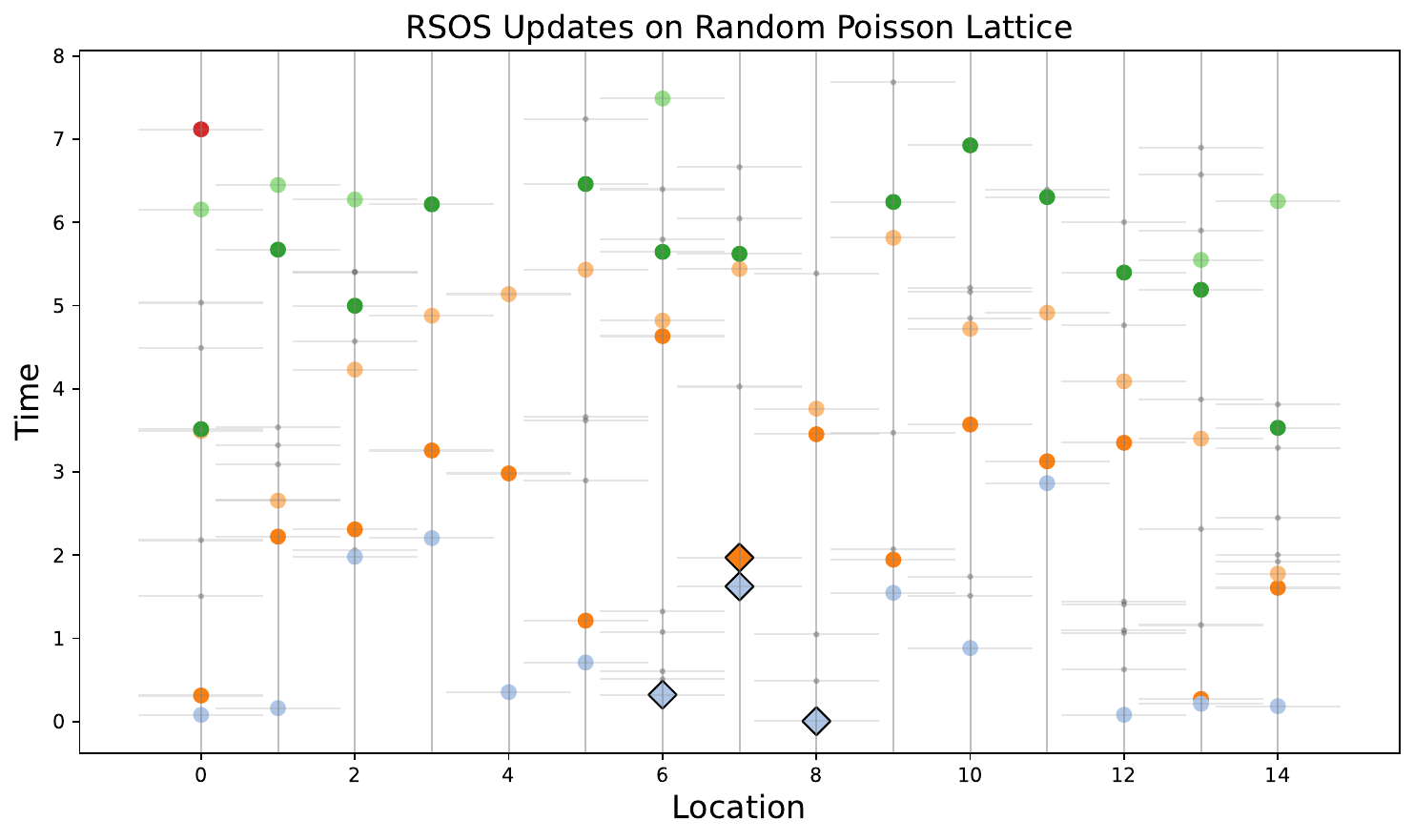}
    \caption{Visualization of accepted updates on the random lattice. The updates are denoted by colored circles, with the colors indicating the new height of the surface. The blue diamond-shaped updates over locations $6,7,8$ indicate the foundation of the orange diamond-shaped update, in the sense of Definition \ref{def:acceptedupdates}.}
    \label{fig:accepted_updates}
\end{figure}

By definition of the RSOS update rule, if there is an accepted update $\bu = (u_t,u_x)$, it must mean that $f(u_t,u_x + n) + 1 \geq f(\bu)$ for all $n \in \cl{N}.$ One way of thinking about this is that for a block to be filled in with height $h$, the blocks at all adjacent locations with height $h-1$ must be present in the surface. The next definition encodes this notion. 

\begin{definition}[Foundation of an accepted update]
Let $\cl{U}$ be the clock ring process associated with an RSOS process $f(t,x)$. Let $\bu = (u_t,u_x) \in \cl{U}$ be an accepted update of the RSOS process $f(t,x),$ such that $f(u_t,u_x) = h,$ with $h > 0.$ Define the \textbf{foundation} $\cl{F}_{\bu}$ of the update $\bu$ as the set of accepted updates $\bv = (v_t,v_x) \in \cl{A}$ such that $v_x - u_x \in \cl{N}_0$ and $f(\bv) = h-1.$ If $h = 0$, $\cl{F}_u = \emptyset.$
\end{definition}

Notice that every update $\mathbf{v}$ in the foundation of $\mathbf{u}$ must satisfy $v_t < u_t$. See Figure \ref{fig:accepted_updates} for an example. The definition also allows us to characterize the accepted updates of an RSOS surface as follows, needing no proof.

\begin{lemma}
\label{lemma:rsosacceptedupdates} 
Let $\bu = (u_t,u_x) \in \cl{A}$ be an accepted update such that $f(\bu) = h.$ Let $\cl{F}_{\bu}$ be the foundation of the accepted update $\bu,$ and let $t_0 = \max \pi_t(\cl{F}_{\bu})$ be the maximum time of all the updates in $\cl{F}_{\bu}$. Then $\bu$ is precisely the first update over location $u_x$ after time $t_0.$ 
\end{lemma}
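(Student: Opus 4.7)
The plan is to verify the two things implied by the statement: that $t_0 < u_t$, and that no clock ring at $u_x$ fires in the open interval $(t_0, u_t)$. The first is immediate from the RSOS rule: for $\bu$ to be accepted with new height $h$, the left limits must satisfy $f(u_t^-, u_x) = h-1$ and $f(u_t^-, u_x + a) \geq h-1$ for every $a \in \cl{N}$. Since accepted updates raise heights by exactly one, each location $y \in u_x + \cl{N}_0$ passes through height $h-1$ at a unique accepted update, which is precisely the corresponding member of $\cl{F}_{\bu}$. Each of these updates occurs strictly before $u_t$, so $t_0 < u_t$.

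The second point is the heart of the argument. I would first pin down the state of $u_x$ and its neighbors throughout $[t_0, u_t)$. By time $t_0$ the last foundation update has fired, so $u_x$ is at height $h-1$ and each neighbor is at height $\geq h-1$; by monotonicity in time of the RSOS heights these lower bounds persist. Since $f(u_t^-, u_x) = h-1$ and accepted updates strictly increase heights by one, no accepted update at $u_x$ can occur in $(t_0, u_t)$, so the height at $u_x$ stays equal to $h-1$ on that interval. Combining this with the adjacency invariant $|f(t, u_x) - f(t, u_x + a)| \leq 1$ forces each neighbor's height to lie in $\{h-1, h\}$ throughout $[t_0, u_t)$.

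With the configuration under control, I would conclude by contradiction. Suppose a clock ring fires at $u_x$ at some $t' \in (t_0, u_t)$. At $t'^-$ the height at $u_x$ is $h-1$ and each neighbor's height lies in $\{h-1, h\}$, so raising the value at $u_x$ to $h$ keeps all adjacent differences within $1$, and this ring would be accepted. But then $f(t', u_x) = h$, contradicting $f(u_t^-, u_x) = h-1$. Hence no clock ring at $u_x$ fires in $(t_0, u_t)$, and $\bu$ is the first such update after $t_0$.

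The only delicate point is maintaining the two-sided bound $f(t, u_x + a) \in \{h-1, h\}$ coherently on $[t_0, u_t)$: the lower bound comes from the foundation having fired together with monotonicity, while the upper bound comes from the RSOS invariant applied while $u_x$ remains at height $h-1$. Both persist without any induction, so the proof is direct once these observations are set up in the right order.
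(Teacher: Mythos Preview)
Your argument is correct; the paper actually states this lemma as ``needing no proof'' and supplies no argument, so what you have written is exactly the routine verification a reader is meant to fill in. The only minor imprecision is the assertion that every $y \in u_x + \cl{N}_0$ reaches height $h-1$ via an accepted update---if the initial height at $y$ is already $\geq h-1$ there is no such foundation member at $y$, but then $f(t,y)\geq h-1$ holds from time $0$ onward and your contradiction step on $(t_0,u_t)$ goes through unchanged.
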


With this characterization, RSOS updates with height one are the first updates to occur over their locations, when working with the zero initial configuration.

\subsection{Minimal Weight Characterization}

The RSOS model admits a characterization in terms of a certain weight assigned to lattice paths (as in Def. \ref{def:latticepath}). This is the content of Proposition \ref{prop:rsosminchar}, and will be crucial in showing a linear variance bound on surface height. \\

Define $W(\gamma)$, the \textit{weight} of a lattice path $\gamma$, as the number of updates $\bu = (u_t,u_x) \in \cl{U}$ which occur at strictly positive time and such that $\bu \in \gamma$ (recall the notation in Definition \ref{def:latticepath}). This notion can be used to redefine depth in Def. \ref{def:updatedepth}: the depth until time zero is equal to $\max W(\gamma)$ over all paths starting at $t=0$ and ending at $(t,x).$ Finally, denote by $\Gamma_{t,x}$ the set of lattice paths ending at $(t,x)$ and starting at the line $t = 0.$ 

\begin{proposition}
Let $f(t,x)$ be the height function of an RSOS surface, with initial condition $H(x), x \in \bb{Z}^d,$ whose adjacent differences are bounded by one. Then
\[
f(t,x) = \min_{\gamma \in \Gamma_{t,x}} \left[ W(\gamma) + H(\gamma(0)) \right].
\]
For shorthand, define $W_H(\gamma) := \left[ W(\gamma) + H(\gamma(0)) \right].$
\label{prop:rsosminchar}
\end{proposition}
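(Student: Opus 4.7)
The plan is to reduce both inequalities to a single identity: at every update $(s,y)$, whether accepted or rejected,
\[
f(s,y) = 1 + \min_{e \in \cl{N}_0} f(s^-, y+e).
\]
I would verify this first. For an accepted update, the acceptance condition $f(s^-,y) \le f(s^-,y+e)$ for all $e \in \cl{N}$ says that $f(s^-,y)$ itself attains the $\cl{N}_0$-minimum, so $f(s,y) = f(s^-,y)+1$ equals the right-hand side. For a rejected update, some neighbor violates the acceptance condition, and the RSOS constraint $|f(s^-,y) - f(s^-,y+e)| \le 1$ forces $f(s^-,y+e) = f(s^-,y)-1$ for that neighbor; the $\cl{N}_0$-minimum is then $f(s^-,y)-1$, so $f(s,y) = f(s^-,y) = 1 + (f(s^-,y)-1)$ again matches.

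For the upper bound $\RSOS(t,x) \le W(\gamma) + H(\gamma(0))$, I would traverse an arbitrary $\gamma \in \Gamma_{t,x}$ forward in time. Enumerate the updates visited by $\gamma$ as $0 < t_1 < \cdots < t_k$ and set $y_i = \gamma(t_i)$ with $y_0 = \gamma(0)$ and $y_k = x$. Between consecutive updates, the path is constant at some $y_i$, and since a lattice path visits every update at its current location, no update occurs at $(s,y_i)$ for $s \in (t_i, t_{i+1})$, so $f(\cdot, y_i)$ is constant there. Applying the identity at $(t_{i+1}, y_{i+1})$ with the particular choice $e' = y_i - y_{i+1} \in \cl{N}_0$ yields $f(t_{i+1}, y_{i+1}) \le 1 + f(t_{i+1}^-, y_i) = 1 + f(t_i, y_i)$; induction on $i$ starting from $f(0, y_0) = H(y_0)$ then gives $f(t, x) \le k + H(y_0) = W(\gamma) + H(\gamma(0))$.

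For the lower bound, I would construct a path $\gamma^*$ attaining equality by induction on the lattice depth $D_0((t,x))$. In the base case $D_0 = 0$, no path from $(t,x)$ visits an update strictly before $t$; a brief case analysis on whether $(t,x)$ is itself an update identifies either the constant path or a one-step jump to a neighbor as an extremizer. For the inductive step, let $s_*$ be the latest update time at $x$ in $(0,t]$, so $f(t,x) = f(s_*, x) = 1 + \min_{e \in \cl{N}_0} f(s_*^-, x+e)$ by the identity. Pick $e^*$ attaining the minimum and apply the inductive hypothesis to $(s_* - \epsilon, x + e^*)$ for some small $\epsilon > 0$ chosen so that no update occurs at $x + e^*$ on $(s_* - \epsilon, s_*)$; its lattice depth is strictly smaller because any path from there extends via the update at $s_*$ to a path from $(t,x)$. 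Concatenating the resulting path $\gamma'$ with a constant segment at $x+e^*$ on $[s_* - \epsilon, s_*)$ and the jump to $x$ at $s_*$ produces $\gamma^*$ with $W_H(\gamma^*) = W_H(\gamma') + 1 = f(s_*^-, x+e^*) + 1 = f(t,x)$.

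The main obstacle is the bookkeeping in the lower bound: ensuring the update at $(s_*, x)$ is counted exactly once in $W(\gamma^*)$ after concatenation, and that the constructed $\gamma^*$ respects right-continuity and the $\cl{N}_0$ jump rule. This is cleanly resolved by the $\epsilon$-separation (available almost surely since the update times at $x+e^*$ are locally finite) and by handling $e^* = 0$ symmetrically, as a ``stay'' through the update at $(s_*, x)$ rather than a jump.
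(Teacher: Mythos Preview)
Your approach and the paper's both hinge on the same key identity --- that at every update $(s,y)$, accepted or not, $f(s,y) = 1 + \min_{e \in \cl{N}_0} f(s^-, y+e)$ --- but organize the argument differently. The paper first shows in a separate lemma that $W_{\min}(t,x) := \min_\gamma W_H(\gamma)$ is itself a surface-growth model obeying this same update rule, and then proves $\RSOS = W_{\min}$ by a single induction on the lattice depth of \emph{updates}, verifying the two recursions agree case by case. You instead split into two inequalities: the upper bound by bounding along an arbitrary path, and the lower bound by explicitly constructing an optimal path. Your upper bound is clean and correct.

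The lower bound has a small but genuine gap in the induction. When $(t,x)$ is itself an update, so that $s_* = t$, your extension argument does not yield a \emph{strict} decrease in depth: $D_0$ counts only updates strictly before the starting time, so the update at $(s_*,x) = (t,x)$ contributes nothing to $D_0((t,x))$, and one can have $D_0((t,x)) = D_0((t-\epsilon, x+e^*))$ (for instance, place a single additional update at $(t', x+e^*)$ with $t' < t-\epsilon$ and nothing else --- both depths equal $1$). The fix is easy: induct instead on $\max_{\gamma \in \Gamma_{t,x}} W(\gamma)$, which always counts the update at $(s_*, x)$ regardless of whether $s_* = t$, so the extension gives a strict decrease in every case. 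Alternatively, mimic the paper and restrict the induction to update points only, invoking its proposition that lattice depth strictly decreases when passing from an update to a neighboring earlier update.
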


By running the surface growth process from a time $s \in (0,t)$ to $t$ with (random) initial condition equal to the RSOS height at $s$, the minimal path representation implies that 
\begin{equation}
\label{eq:rsos_fpp_formulation}
    f(t,x) = \min_{\gamma \in \Gamma_{(t,x) \rightarrow (s,\bullet)}} \left[ W(\gamma) + f(s,\gamma(s)) \right]
\end{equation}
where $\Gamma_{(s,\bullet) \rightarrow (t,x)}$ denotes the set of all lattice paths starting at time $s$ and ending at $(t,x)$.

To prove Proposition \ref{prop:rsosminchar}, first consider the quantity on the right hand side as a surface growth model of itself. It turns out to have the following update rule:

\begin{lemma}
\label{lemma:wmin_update}
Denote by $W_{\min}(t,x)$ the quantity $\min_{\gamma \in \Gamma_{t,x}} W_H(\gamma).$ Then $W_{\min}(t,x)$ is a surface growth model satisfying the local update rule
\begin{equation}
\label{eq:wmin_surfaceupdate}W_{\min}(t,x) = \min_{n \in \cl{N}_0}\set{1 + W_{\min}(t^-,x+n)}
\end{equation}
with initial condition $H(x).$
\end{lemma}
\begin{proof}[Proof of Lemma]

First, $\min_{\gamma \in \Gamma_{0,x}} W_H(\gamma) = H(x)$ since $\Gamma_{0,x}$ consists of the trivial path starting and ending at $(0,x),$ and the weight function $W(\gamma)$ only counts updates at strictly positive times. Thus, by definition, $W_{\min}(0,x) = H(x).$

Now at every fixed location $x$, it is clear that the quantity $W_{\min}(t,x)$ is a surface growth process, as it only changes at the time of a clock ring. We must prove the local update rule holds for $\bu = (t,x) \in \cl{U}$ an update time. From this observation, the quantity $W_{\min}(t^-,x+n)$ is equal to $W_{\min}(\bu')$ for $\bu'$ the latest update strictly before time $t$ with location $x+n.$ 

Consider every path in $\Gamma_{t,x};$ immediately before time $t$ every path must be at a location $x+n$ for $n \in \cl{N}_0.$ Denote the set of paths (in $\Gamma_{t,x}$) ending at $x+n$ immediately before $t$ by $\cl{P}(t^-,x+n)$ so that 
\[
\Gamma_{t,x} = \bigsqcup_{n \in \cl{N}_0} \cl{P}(t^-,x+n).
\]

Then $\min_{\gamma \in \Gamma_{t,x}} W_H(\gamma) = \min_{n \in \cl{N}_0} \min_{\gamma \in \cl{P}(t^-,x+n)} W_H(\gamma)$. Note that the set $\cl{P}(t^-,x+n)$ is different from the set $\Gamma_{t^-,x+n}$, due to the endpoint of the paths; the difference is that every path in $\cl{P}(t^-,x+n)$ contains the update $\bu.$ Hence,
\begin{equation}
\min_{\gamma \in \Gamma_{t,x}} W_H(\gamma) = \min_{n \in \cl{N}_0} \min_{\gamma \in \Gamma_{t^-,x+n}} 1 + W_H(\gamma),
\end{equation}
which is the desired equality.

\end{proof}

\begin{proof}[Proof of Prop. \ref{prop:rsosminchar}]
We will show that
\[
f(t,x) = W_{\min}(t,x),
\]
Notice that both the left and right hand side are surface growth models. In particular, $W_{\min}(t,x)$ has local update rule given in Eq. \eqref{eq:wmin_surfaceupdate}. It suffices to show that both surface growth models have the same initial condition, and the same update rule. Then they must agree at all space-time points, which can be made formal by an induction on the lattice depth of an update.

The initial conditions agree, as by definition $f(0,x) = H(x)$, and we showed that $W_{\min}(0,x) = H(x)$ in Lemma \ref{lemma:wmin_update}. Now, let $\mathbf{u} = (t,x)$ be an update. We will show that
\begin{equation}
\label{eq:alterante_rsos_update}
f(t,x) = 1 + \min_{n \in \cl{N}_0} f(t^-,x+n),    
\end{equation}
which is exactly the same update rule given in Eq. \eqref{eq:wmin_surfaceupdate}.

Firstly, suppose $\bu$ is an RSOS accepted update, and the surface height changes from $h$ to $h+1$. In particular, $f(t^-,x) = h$ and $f(t^-,x+n) \geq h, \forall n \in \cl{N}_0.$ 
This forces $\min_{n \in \cl{N}_0} f(t^-,x+n) = h$, so both sides of Eq. \eqref{eq:alterante_rsos_update} are equal to $h+1.$

Secondly, if $\bu$ is not accepted, then the RSOS height is unchanged from time $t^-.$ Denote the height $f(t^-,x) = h.$ This implies there exists a $n \in \cl{N}_0$ such that $f(t^-,x+n) = h-1$. Further by the RSOS restriction, $f(t^-,x+n') \geq h-1$ for every $n' \in \cl{N}_0.$ This implies $\min_{n \in \cl{N}_0} f(t^-,x+n) = h-1$, and so both sides of Eq. \eqref{eq:alterante_rsos_update} are equal to $h.$
\end{proof}

As mentioned, the minimum weight characterization established in Prop. \ref{prop:rsosminchar} motivates the consideration of RSOS in a potentially novel way. In \cite{penrose2008growth}, it was shown that the height of ballistic deposition is the maximal path weight. In our setting, the result looks as follows. Consider a ballistic deposition process with `corner' touches. Representing the surface height of the ballistic deposition process by $g(t,x)$, the process has update rule given by
\[
g(t,x) = \max_{n \in \cl{N}_0}(1 + g(t^-,x + n)).
\]
Then 
\[
g(t,x) = \max_{\gamma \in \Gamma_{t,x}} W(\gamma).
\]

This follows similarly to the inductive proofs given for RSOS. In this way however, ballistic deposition can be seen as a directed `last passage percolation' on the random Poisson lattice, viewed as a disordered geometry. In direct analogy, the result established here for RSOS shows that it is an analog of directed first passage percolation on this lattice. \\

Similar representations for surface growth models in terms of paths on the Harris lattice appear in \cite{cannizzaro2020brownian, chatterjee2021superconcentration, ganguly2021cutoff, ferrari2006harness, penrose2008growth, comets2022scaling}.  \cite{cannizzaro2020brownian} studies in great detail a model of surface growth called $0$-ballistic deposition, beginning with the observation that the one-point distribution of the surface height may be described by a type of random walk on a disordered lattice. \cite{ganguly2021cutoff} also provides a representation of Glauber dynamics for the discrete Gaussian free field in terms of functionals for these types of paths; the analysis of the more general harness process \cite{ferrari2006harness} began with a similar observation. In \cite{chatterjee2021superconcentration}, Chatterjee introduced a general method to study fluctuations in discrete time surface growth models, based on a backwards-in-time random walk representation of the variance.

\section{Influence of Updates on RSOS Height}
\label{sec:influenceofupdates}

Throughout this section, we view RSOS surface height as a function of a Poisson point process. To denote the dependence of RSOS height at $(t,x)$ on the update times, write $f_{t,x}(\cl{U})$.\\ 

It turns out that deleting or adding an update on the random Poisson lattice will only affect future surface heights minimally; in fact it will change by at most one. The lower bound is a monotonicity property which will be exploited in other proofs.

\begin{proposition}
\label{prop:rsosheightboundedbysingleupdate}
Suppose there exist two RSOS surfaces $f,\tilde{f}$ which share the same set of updates $\cl{U}$, starting at time $s.$ Suppose that 
\[
f(s,x) \leq \tilde{f}(s,x) \leq f(s,x) + 1, \forall x \in \bb{Z}^d.
\]
Then the same inequality holds for all time $t > s.$
\end{proposition}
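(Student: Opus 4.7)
The plan is to reduce the claim to a pointwise comparison of initial data via the minimum weight characterization of Prop.~\ref{prop:rsosminchar}. Both $f$ and $\tilde{f}$ are RSOS surfaces, so at time $s$ each has adjacent height differences bounded by one; this is exactly the hypothesis needed to apply the minimum-weight formula, restarted at time $s$ rather than time $0$, as the author notes in the paragraph following Prop.~\ref{prop:rsosminchar}. Since the two surfaces share the same update process on $[s,t]$, the weight $W(\gamma)$ attached to a lattice path $\gamma \in \Gamma_{(t,x)\to(s,\bullet)}$ is literally the same for both surfaces, and only the initial condition at time $s$ differs.

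Concretely, Prop.~\ref{prop:rsosminchar} yields
\[
f(t,x) \;=\; \min_{\gamma \in \Gamma_{(t,x)\to(s,\bullet)}} \bigl[\, W(\gamma) + f(s,\gamma(s)) \,\bigr], \qquad \tilde{f}(t,x) \;=\; \min_{\gamma \in \Gamma_{(t,x)\to(s,\bullet)}} \bigl[\, W(\gamma) + \tilde{f}(s,\gamma(s)) \,\bigr].
\]
The lower inequality $f(t,x) \leq \tilde{f}(t,x)$ is then immediate from $f(s,y) \leq \tilde{f}(s,y)$ pointwise in $y$: the minimum of a pointwise-smaller integrand is smaller. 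The upper inequality $\tilde{f}(t,x) \leq f(t,x)+1$ follows from $\tilde{f}(s,y) \leq f(s,y)+1$ by pulling the additive constant outside the minimum.

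I do not anticipate a real obstacle; the content of the claim is transported cleanly onto a comparison of initial data. The one item worth double-checking is that $\tilde{f}(s,\cdot)$ satisfies the bounded-adjacent-differences hypothesis of the minimum weight characterization, but this is automatic because $\tilde{f}$ is itself an RSOS surface. As a sanity check one could alternatively argue by induction on the updates in $[s,t]$, ordered by lattice depth: between updates the sandwich is preserved trivially, and at an update over $x_0$ a short case analysis on the four accept/reject combinations for $f$ and $\tilde{f}$ handles the step. The only delicate combination is $\tilde{f}(t^-,x_0) = f(t^-,x_0)+1$ with $f$ rejecting and $\tilde{f}$ accepting; this is ruled out because rejection by $f$ forces some neighbor to have $f$-height $h-1$, whence $\tilde{f}$-height at most $h$, contradicting acceptance by $\tilde{f}$, which requires every neighbor to be at least $h+1$.
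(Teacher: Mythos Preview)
Your proof is correct and takes a genuinely different route from the paper's own argument. The paper proves this proposition by a direct induction on the lattice depth $D_s$: at each update it runs a small case analysis, arguing by contradiction that neither $\tilde{f}$ can jump two ahead of $f$ (upper bound) nor $f$ can overtake $\tilde{f}$ (lower bound). This is precisely the alternative sketch you outline in your final paragraph. Your primary argument instead invokes the minimum-weight representation of Prop.~\ref{prop:rsosminchar} (restarted at time $s$, as the paper explicitly permits), which reduces the sandwich inequality to the trivial monotonicity of $\min_\gamma[W(\gamma)+H(\gamma(s))]$ in the boundary data $H$. Since Prop.~\ref{prop:rsosminchar} is proved in the paper without any appeal to the present proposition, there is no circularity.

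What each approach buys: the paper's induction is self-contained and makes no use of the path machinery, so it could in principle be read before Section~3. Your argument is shorter and conceptually cleaner---once the variational formula is in hand, both inequalities drop out in one line with no case analysis at all---and it makes transparent that the constant $1$ in the hypothesis plays no special role (any uniform additive gap propagates). The only point to be careful about, which you address, is that $\tilde{f}(s,\cdot)$ must itself have adjacent differences bounded by one for Prop.~\ref{prop:rsosminchar} to apply; this is guaranteed by the standing assumption that $\tilde{f}$ is an RSOS surface, not merely by the sandwich $f(s,\cdot)\le\tilde{f}(s,\cdot)\le f(s,\cdot)+1$ alone.
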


\begin{proof}
The proof follows directly from the minimal path formulation Eq. \eqref{eq:rsos_fpp_formulation}, which implies that
\begin{align*}
f(t,x) & = \min_{\gamma \in \Gamma_{(s,\bullet) \rightarrow (t,x)}} \left[ W(\gamma) + f(s,\gamma(s)) \right] \\
\tilde{f}(t,x) & = \min_{\gamma \in \Gamma_{(s,\bullet) \rightarrow (t,x)}} \left[ W(\gamma) + \tilde{f}(s,\gamma(s)) \right]
\end{align*}
for all times $t > s$, $x \in \bb{Z}^d$. Because both surfaces share the same set of updates $\cl{U}$ in $(s,t]$, the set of paths $\Gamma_{(t,x) \rightarrow (s,\bullet)}$ are exactly equal. By the first inequality of the assumption,
\[
W(\gamma) + f(s,\gamma(s)) \leq W(\gamma) + \tilde{f}(s,\gamma(s))
\]
for all paths in $\Gamma_{(s,\bullet) \rightarrow (t,x)}$; taking the minimum over $\gamma \in \Gamma_{(s,\bullet) \rightarrow (t,x)}$ yields $f(t,x) \leq \tilde{f}(t,x)$. Similarly, by the second inequality of the assumption,
\[
W(\gamma) + \tilde{f}(s,\gamma(s)) \leq W(\gamma) + f(s,\gamma(s))+1.
\]
Taking the minimum over $\gamma \in \Gamma_{(s,\bullet) \rightarrow (t,x)}$ yields $\tilde{f}(t,x) \leq f(t,x) + 1$.

\end{proof}

The proof can also be done by directly showing the desired inequality at all accepted updates at times $t > s,$ using an induction on the lattice depth. The question still remains -- if an update is added into the set $\cl{U}$ at location $(s,y)$ to obtain $\cl{U} + \delta_{(s,y)},$ which locations will change the surface height at location $(t,x)$? Call a point $(s,y)$ \textit{influential} if it satisfies
\[
f_{(t,x)}(\cl{U}) + 1 = f_{(t,x)}(\cl{U} + \delta_{(s,y)}).
\]
While it is difficult to understand exactly which updates will affect the height, we can deduce an important necessary condition from the characterization of RSOS as the minimal path weight.

\begin{proposition}
\label{prop:influential_location}
Let $\gamma$ be a path that realizes the minimum weight given the clock ring process $\cl{U}.$ Then if some point $(s,y)$ is influential, $(s,y) \in \gamma.$
\end{proposition}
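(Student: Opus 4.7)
The plan is to reduce the claim to a one-line comparison using the minimal-weight characterization of Prop.~\ref{prop:rsosminchar}, and then argue by contradiction. Let $\cl{U}' := \cl{U} + \delta_{(s,y)}$ denote the augmented lattice, and write $W_{\cl{U}}(\gamma)$ when I want to emphasize the dependence of the weight on the underlying point process; correspondingly, $\Gamma_{t,x}(\cl{U})$ will denote the set of lattice paths compatible with $\cl{U}$.

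The first step is a structural observation: any lattice path on $\cl{U}$, viewed as a $\bb{Z}^d$-valued function, is automatically a lattice path on $\cl{U}'$. Indeed, Def.~\ref{def:latticepath} only requires right continuity, piecewise constancy, and that jumps occur at updates, which is a weaker condition for the larger point process $\cl{U}'$. Hence $\Gamma_{t,x}(\cl{U}) \subseteq \Gamma_{t,x}(\cl{U}')$. Moreover, for every $\gamma \in \Gamma_{t,x}(\cl{U})$, the weight transforms very simply:
\[
W_{\cl{U}'}(\gamma) \;=\; W_{\cl{U}}(\gamma) + \mathbf{1}\bigl[(s,y) \in \gamma\bigr],
\]
since $\cl{U}$ and $\cl{U}'$ differ only by the single point $(s,y)$, which is counted in the weight iff it lies on $\gamma$.

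Now take $\gamma$ to be a minimizer in Prop.~\ref{prop:rsosminchar}, so $W_{\cl{U}}(\gamma) = \RSOS_{(t,x)}(\cl{U})$, and suppose for contradiction that $(s,y) \notin \gamma$. Then $W_{\cl{U}'}(\gamma) = W_{\cl{U}}(\gamma)$ by the display above, and using $\gamma$ as a candidate path on the enlarged lattice,
\[
\RSOS_{(t,x)}(\cl{U}') \;=\; \min_{\gamma' \in \Gamma_{t,x}(\cl{U}')} W_{\cl{U}'}(\gamma') \;\leq\; W_{\cl{U}'}(\gamma) \;=\; \RSOS_{(t,x)}(\cl{U}),
\]
contradicting the defining equation $\RSOS_{(t,x)}(\cl{U}') = \RSOS_{(t,x)}(\cl{U}) + 1$ for $(s,y)$ to be influential. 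Hence $(s,y) \in \gamma$, which is the desired conclusion.

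There is essentially no serious obstacle; the proof is pure bookkeeping once Prop.~\ref{prop:rsosminchar} is in hand. The only point that needs care is cleanly separating two notions that both depend on $\cl{U}$: path admissibility, which depends only on where the updates sit and so only becomes easier when we add one, versus path weight, which counts updates on a given path and therefore only changes if the added update actually lies on that path. Both properties follow directly from Def.~\ref{def:latticepath} and the definition of $W(\gamma)$, so the argument goes through without subtlety.
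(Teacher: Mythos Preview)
Your proof is correct and follows essentially the same approach as the paper: both argue by contradiction that if $(s,y)\notin\gamma$, the weight of $\gamma$ is unchanged after adding the update, so the RSOS height cannot increase. The only minor difference is that the paper invokes the monotonicity of Prop.~\ref{prop:rsosheightboundedbysingleupdate} to conclude the height is \emph{unchanged}, whereas you directly bound $\RSOS_{(t,x)}(\cl{U}')\leq\RSOS_{(t,x)}(\cl{U})$ from the path inclusion $\Gamma_{t,x}(\cl{U})\subseteq\Gamma_{t,x}(\cl{U}')$; your route is slightly more self-contained since the inequality alone already contradicts the definition of influential.
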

\begin{proof}
By the characterization in Prop. \ref{prop:rsosminchar}, the RSOS height is equal to the minimum weight over all paths. Suppose that $(s,y) \notin \gamma$. Then $W(\gamma)$ is still unchanged by the addition of the update. This must still be the minimal height, as the addition of an update can never decrease the RSOS height by Prop. \ref{prop:rsosheightboundedbysingleupdate}. Hence $f(t,x)$ is unchanged by the addition of the update at $(s,y).$
\end{proof}

We conclude that the set of influential updates -- those that change the height of the surface -- are contained in a one dimensional subset of the space. From this observation, the linear fluctuation bound follows easily from an appropriate Poincaré inequality. 

\begin{theorem}
\label{thm:linearvariancebound}
In all spatial dimensions,
\[
\Var(f(t,x)) \leq t.
\]
\end{theorem}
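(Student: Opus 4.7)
The plan is to apply the Poisson Poincar\'e (Efron--Stein-type) inequality for functionals of a Poisson point process. Viewing $\cl{U}$ as a Poisson point process on $\bR^+ \times \bb{Z}^d$ with intensity measure $\mu = \mathrm{Leb} \otimes \#$, for any square-integrable functional $F$ this inequality states
\[
\Var F(\cl{U}) \leq \bb{E}\left[\int (D_u F)^2 \, d\mu(u)\right],
\]
where $D_u F = F(\cl{U} + \delta_u) - F(\cl{U})$ is the add-one discrete derivative. I would apply this with $F = \RSOS_{t,x}$; square integrability is automatic since the trivial lattice path that stays at $x$ gives $\RSOS(t,x) \leq N_x(t)$, the Poisson count of updates at $x$ over $[0,t]$, which has mean $t$.

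The next step is a pointwise bound on the integrand. Fix $u = (s,y)$ with $s < t$. At time $s$ the two processes built from $\cl{U}$ and $\cl{U} + \delta_u$ agree off $y$ and differ at $y$ by either $0$ or $1$ (according to whether the inserted update is accepted). Proposition \ref{prop:rsosheightboundedbysingleupdate}, applied from time $s$ forward, then propagates this bound to every later spacetime point, so $D_u F \in \{0,1\}$ and
\[
(D_u F)^2 = D_u F = \mathbf{1}\{u \text{ is influential}\}.
\]

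Now I invoke the geometric localization given by Proposition \ref{prop:influential_location}. Choose some measurable selection $\gamma^\star \in \Gamma_{t,x}$ of a minimizing lattice path for $\cl{U}$ (for instance via a deterministic lexicographic tie-break). Every influential $u$ must lie on $\gamma^\star$, hence
\[
\int \mathbf{1}\{u \text{ influential}\} \, d\mu(u) \;\leq\; \int \mathbf{1}\{u \in \gamma^\star\} \, d\mu(u).
\]
Because $\gamma^\star$ is piecewise constant on $[0,t]$ and occupies exactly one site of $\bb{Z}^d$ at each time, unwinding the product measure gives the right-hand side equal to $\int_0^t 1 \, ds = t$ almost surely. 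Taking expectations and inserting into the Poincar\'e inequality yields $\Var \RSOS(t,x) \leq t$.

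The computation is short and structural rather than delicate; the only care points are the measurability of $\gamma^\star$ and the hypotheses of the Poisson Poincar\'e inequality (the latter handled by the Poisson domination above). All of the real substance sits in the earlier lemmas: Proposition \ref{prop:rsosheightboundedbysingleupdate} caps $|D_u F|$ by one, and Proposition \ref{prop:influential_location} confines the support of $D_u F$ to a one-dimensional curve in spacetime of total $\mu$-mass $t$.
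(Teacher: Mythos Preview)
Your proposal is correct and follows essentially the same route as the paper: apply the Poisson Poincar\'e inequality, use Proposition~\ref{prop:rsosheightboundedbysingleupdate} to bound $D_uF\in\{0,1\}$, then Proposition~\ref{prop:influential_location} to confine the support of $D_uF$ to a minimal path $\gamma^\star$, whose $\mu$-mass is exactly $t$. The only cosmetic differences are that you spell out the square-integrability via the Poisson domination $\RSOS(t,x)\le N_x(t)$ (the paper defers this to Lemma~\ref{lemma:poissonub}) and flag the measurable selection of $\gamma^\star$, which the paper leaves implicit.
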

The proof uses a Poincaré inequality on Poisson space from \cite{last2017lectures}:

\begin{theorem}[Theorem 18.7 of \cite{last2017lectures}]
Let $\eta$ be a Poisson process on any measurable space $(\bb{X},\cl{X})$ with $\sigma$-finite intensity measure $\lambda.$ Suppose $g \in \bb{L}^2(\Prob_\eta)$, where $\Prob_\eta$ is the distribution of $\eta.$ Then
\[
\Var[g(\eta)] \leq \int \E[(g(\eta + \delta_x) - g(\eta))^2] \lambda(dx).
\]
\end{theorem}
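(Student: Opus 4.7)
The plan is to prove the Poincaré inequality via the Wiener--Itô chaos expansion of $L^2(\Prob_\eta)$, which is the route developed in \cite{last2017lectures}. The starting point is the foundational theorem that every $f \in L^2(\Prob_\eta)$ admits a unique orthogonal decomposition
\[
f(\eta) = \E f + \sum_{n \geq 1} I_n(f_n),
\]
where each $f_n \in L^2_s(\lambda^n)$ is a symmetric kernel and $I_n$ is the $n$-th order multiple stochastic integral against the compensated Poisson measure. Orthogonality of distinct chaoses together with the isometry $\E[I_n(f_n)^2] = n! \, \|f_n\|_{L^2(\lambda^n)}^2$ immediately give
\[
\Var[f(\eta)] = \sum_{n \geq 1} n! \, \|f_n\|_{L^2(\lambda^n)}^2.
\]

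Next I would analyze the difference operator $D_x f(\eta) := f(\eta + \delta_x) - f(\eta)$ at the level of chaoses, via the annihilation-type identity
\[
D_x I_n(f_n) = n \, I_{n-1}\!\left(f_n(x, \cdot)\right) \qquad \text{for $\lambda$-a.e.\ } x.
\]
I would verify this first on indicator functions $\mathbf{1}_{A_1 \times \cdots \times A_n}$ with disjoint bounded $A_i$, where $I_n(\mathbf{1}_{A_1 \times \cdots \times A_n})$ reduces to the explicit product $\prod_i (\eta(A_i) - \lambda(A_i))$ of compensated counts and the identity becomes elementary, and then extend by symmetrization and $L^2$-continuity. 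Granting it, orthogonality within $D_x f$, the isometry applied chaos-by-chaos, and Fubini yield
\[
\int \E[(D_x f(\eta))^2] \, \lambda(dx) = \sum_{n \geq 1} n^2 (n-1)! \, \|f_n\|_{L^2(\lambda^n)}^2 = \sum_{n \geq 1} n \cdot n! \, \|f_n\|_{L^2(\lambda^n)}^2.
\]
The Poincaré inequality then reduces to termwise comparison with the variance formula, using $n \cdot n! \geq n!$ for all $n \geq 1$. Equality is attained iff only the first chaos is nonzero.

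The main technical obstacles sit entirely in the chaos setup rather than in the termwise comparison. Existence and uniqueness of the expansion require density of symmetric compensated simple functionals in $L^2(\Prob_\eta)$, which is typically proved via the Mecke formula combined with a monotone class argument. The annihilation identity also requires care on the diagonal where the inserted point $x$ coincides with an existing atom of $\eta$: when $\lambda$ is diffuse this diagonal is $\Prob_\eta \otimes \lambda$-null, and for general $\sigma$-finite $\lambda$ one approximates by restricting to sets of finite measure and handles atoms separately. With these pieces available as a black box from Chapters~12--18 of \cite{last2017lectures}, the remainder of the proof is the short isometric computation above.
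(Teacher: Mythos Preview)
Your sketch is correct and is essentially the argument given in Chapter~18 of \cite{last2017lectures}: chaos expansion, the annihilation identity $D_x I_n(f_n)=nI_{n-1}(f_n(x,\cdot))$, isometry, and termwise comparison $n\cdot n!\ge n!$. However, there is nothing in the present paper to compare against: the paper does not prove this Poincar\'e inequality at all but simply quotes it verbatim as Theorem~18.7 of \cite{last2017lectures} and then applies it as a black box to bound $\Var(\RSOS(t,x))$. So your proposal is not an alternative to the paper's proof---it \emph{is} (a summary of) the textbook proof the paper is citing.
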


We identify $f_{(t,x)}$ as the function $g$, which is the function of the Poisson process $\cl{U}$ on the space $\bb{X} = \bb{Z}^d \times [0,t]$. Take $\cl{X}$ to be the product sigma algebra. The intensity measure $\lambda$ is given by the sum of Lebesgue measures on each copy of $[0,t]$ which is clearly sigma finite. Note that $f_{(t,x)} \in \bb{L}^2(\Prob_\eta)$ because of the stochastic upper bound proven in Lemma \ref{lemma:poissonub}.

\begin{proof}[Proof of Thm. \ref{thm:linearvariancebound}]
Let $\gamma_{\cl{U}}$ be a minimal path of the clock ring process $\cl{U}$. By Prop. \ref{prop:rsosheightboundedbysingleupdate} and Prop. \ref{prop:influential_location}, bound the difference operator by
\[
f(\eta + \delta_\bu) - f(\eta) \leq \mathbf{1}\set{\bu \in \gamma_{\cl{U}} }.
\]
This yields (by Fubini, given $\sigma$-finiteness of $\lambda$),
\begin{align*}
\Var[f(\eta)] & \leq \E \left[ \int \mathbf{1}\set{\bu \in \gamma_{\cl{U}}} \lambda(d\bu) \right] \\
 & \leq \E \left[ t \right] \\
 & = t,
\end{align*}
as desired.
\end{proof}

The linear in time variance bound is reminiscent of fluctuation bounds for planar first passage percolation models; it mirrors the first few landmark results in this area, such as the linear variance bound of Kesten \cite{kesten1993speed}, and its log-factor improvement by Benjamini, Kalai, and Schramm for a certain distribution of edge weights \cite{benjamini2011first}. See also \cite{benaim2008exponential, damron2014subdiffusive} for extensions of the logarithmic improvement for a much broader class of distributions. \cite{basu2023rotationally} also announces a polynomial improvement to the variance bound via a multiscale argument for certain rotationally invariant FPP models. It would be a natural goal to mirror this line of work and prove a sublinear variance bound for RSOS.

A related open problem is to prove a (sub)-linear variance bound for ballistic deposition. The proof strategy here fails primarily because the analogous upper bound in Prop. 4.1 fails to hold for ballistic deposition.

\section{Bounds on RSOS height}
Having introduced the minimal characterization of the RSOS process and deriving consequences for variance upper bounds, we turn to consequences of this representation for lower bounds on fluctuations, mirroring the arguments of \cite{penrose2008growth}. Ultimately, a logarithmic lower bound on the variance for RSOS surface heights is obtained, in one spatial dimension. Towards this, let us first establish basic observations about the almost sure growth rate of the process which apply in all dimensions.

\begin{lemma}
\label{lemma:poissonub}
Let $f(t,x)$ denote RSOS surface height started from zero initial condition at time $t$, location $x$. Then for any natural number $k,$
\[
\E f(t,x)^k \leq Ct^k,
\]
for some constant $C$ depending on $k.$
\end{lemma}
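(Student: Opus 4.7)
The plan is to invoke the minimal-weight characterization of Proposition \ref{prop:rsosminchar} with the simplest possible test path. Consider the ``stay-put'' lattice path $\gamma_0 \in \Gamma_{t,x}$ given by $\gamma_0(s) = x$ for every $s \in [0,t]$. Its weight $W(\gamma_0)$ is precisely the number of clock rings in $\cl{U}_x$ occurring in $(0,t]$, which is Poisson distributed with mean $t$. Since the initial condition is the zero configuration, Proposition \ref{prop:rsosminchar} gives the deterministic pathwise bound
\[
f(t,x) = \min_{\gamma \in \Gamma_{t,x}} W(\gamma) \leq W(\gamma_0) =: N_x(t).
\]

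Taking $k$-th moments, $\E f(t,x)^k \leq \E N_x(t)^k$, and the right-hand side is the Touchard polynomial $T_k(t) = \sum_{j=0}^k S(k,j)\, t^j$, where $S(k,j)$ denotes the Stirling numbers of the second kind; this is a polynomial of degree $k$ in $t$ with non-negative coefficients. Consequently $T_k(t) \leq \big(\sum_j S(k,j)\big)\, t^k$ for all $t \geq 1$, which yields the stated bound in the large-$t$ regime relevant for the applications of this lemma (the small-$t$ regime can be absorbed either by adjusting the constant or by replacing $t^k$ with $(1+t)^k$ throughout).

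There is essentially no obstacle in the argument; the only content is the recognition that the trivial stay-put path furnishes a deterministic pathwise upper bound on the RSOS height via Proposition \ref{prop:rsosminchar}, reflecting the fact that RSOS is a \emph{minimum}-type growth process. This one-path bound would give no information for a maximum-type process such as ballistic deposition, where the natural approach would instead be to look for unusually heavy paths.
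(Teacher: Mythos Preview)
Your argument is correct and is essentially the paper's own proof: both establish the pathwise bound $f(t,x) \leq N_x(t)$ with $N_x(t)\sim\mathrm{Poisson}(t)$ and then use the $k$th moment of a Poisson. The only cosmetic difference is that the paper justifies this inequality directly (the height at $x$ equals the number of \emph{accepted} updates at $x$, hence is at most the total number of updates there) rather than by invoking Proposition~\ref{prop:rsosminchar} with the stay-put path; your observation about the small-$t$ regime applies equally to the paper's version.
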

\begin{proof}
For any $t,x$, view the surface height at $(t,x)$ of the RSOS surface growth model as a function of the underlying Point process. Under this perspective, we claim
\[
f(t,x)(\cl{U}) \leq |\cl{U}_x|,
\]
where the right hand side denotes the number of clock rings over $x$ up to time $t,$ which is $\Poisson(t)$ distributed.

To see this, recall that RSOS only changes height at accepted updates, and at these updates, the height only increases by one. Thus $f(t,x)$ is equal to the number of accepted updates over the location $x.$ Hence $f(t,x)$ is bounded above by the total number updates over the location $x,$ given by $|\cl{U}_x|.$

This inequality provides a coupling between RSOS surface height and a $\Poisson(t)$ distributed random variable. As the $k$th moments of a Poisson$(t)$ random variable are bounded by a multiple of $t^k$, the result follows.
\end{proof}

We provide a matching linear lower bound on the growth rate of RSOS. This uses the minimal path representation developed earlier. To begin, define the event $A$ as the event that there a exists path $\gamma$ from $(t,0)$ to $(0,\bb{Z}^d)$ with weight $W(\gamma) \leq t/(10d).$ The probability of this event is exponentially small:
\begin{lemma}
\label{lemma:exponentially_smallprob_height}
Define
\[
A = \set{ \exists \  \gamma \in \Gamma_{t,0} \text{ s.t. } W(\gamma) \leq t/(10d)}.
\]
Then for $t \geq 1,$ $\Prob(A) \leq \exp(-ct)$ for some positive constant $c$ not depending on $d$.
\end{lemma}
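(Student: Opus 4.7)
My plan is to reduce the bound to a union bound over lattice-path trajectories combined with a standard Chernoff estimate for a sum of i.i.d.\ exponential waiting times. First I would encode a lattice path from $(t, 0)$ to time $0$ by the sequence of choices it makes at successive updates as it proceeds backward in time. A choice sequence $\bar e = (e_1, \ldots, e_k) \in \cl{N}_0^k$ deterministically fixes the visited locations $y_0 = 0, y_1, \ldots, y_k$ with $y_i \in y_{i-1} + \cl{N}_0$, while the encounter times $t = s_0 > s_1 > \ldots > s_k$ are read off the Poisson process $\cl{U}$: $s_i$ is the largest update-time in $\cl{U}_{y_{i-1}}$ strictly below $s_{i-1}$. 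Writing $T_i := s_{i-1} - s_i$, the definition of $W$ (updates at strictly positive times) gives the characterization that the path associated to $\bar e$ has weight at most $k$ if and only if $\sum_{i=1}^{k+1} T_i \geq t$ --- the $(k+1)$-st encounter then lies at non-positive time and is not counted.

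The key claim is that for any fixed trajectory $\bar e$, the waiting times $T_1, \ldots, T_{k+1}$ are i.i.d.\ $\mathrm{Exp}(1)$. This uses independence of the Poisson processes $\cl{U}_y$ at distinct sites together with the memoryless property at each site: at step $i$ the revealed information is the location of the next update at $y_{i-1}$ backward from $s_{i-1}$, which lives in a strip at $y_{i-1}$ disjoint from every previously-revealed strip at $y_{i-1}$, since all such strips lie strictly above $s_{i-1}$ (the $s_j$ decrease in $j$). This is true even when the trajectory revisits a site, so the relevant region is a fresh stationary Poisson$(1)$ and yields an $\mathrm{Exp}(1)$ waiting time independent of everything revealed so far.

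Taking $k := \lfloor t/(10d) \rfloor$, a union bound over the $(2d+1)^k$ trajectories together with a Chernoff bound on a $\mathrm{Gamma}(k+1, 1)$ random variable gives
\[
\Prob(A) \leq (2d+1)^k \, \Prob\!\biggl(\sum_{i=1}^{k+1} T_i \geq t\biggr) \leq (2d+1)^k \, e^{-t}\biggl(\frac{et}{k+1}\biggr)^{k+1} \leq C(d)\exp(-c(d)\, t),
\]
where $c(d) = 1 - (1 + \log(10de(2d+1)))/(10d) > 0$ for every integer $d \geq 1$ (a short explicit check gives $c(1) \approx 0.56$ and $c(d) \to 1$ as $d \to \infty$). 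The main technical obstacle is the i.i.d.\ exponential verification above, particularly the bookkeeping for trajectories that revisit spatial sites; once that is in hand, the union bound and Chernoff estimate are routine.
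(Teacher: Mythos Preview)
Your argument is correct and follows essentially the same route as the paper's proof: both encode a lattice path by its sequence of jump choices in $\cl{N}_0$, use the memoryless property to see that the inter-update waiting times along any fixed trajectory are i.i.d.\ $\mathrm{Exp}(1)$, and then union-bound over the $(2d+1)^k$ trajectories. The only cosmetic difference is that the paper sums $\Prob(A_k)\le (2d+1)^k\Prob(\mathrm{Poi}(t)=k)$ over $k\le t/(10d)$ and invokes a Poisson lower-tail bound, whereas you fix $k=\lfloor t/(10d)\rfloor$ and apply a Chernoff bound to the $\mathrm{Gamma}(k+1,1)$ tail directly; these are equivalent since $\{\sum_{i=1}^{k+1}T_i\ge t\}=\{\mathrm{Poi}(t)\le k\}$. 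Your write-up is actually more careful about the i.i.d.\ verification when the trajectory revisits a site, which the paper leaves implicit. (Minor typo: your stated exponent should be $c(d)=1-\log(10de(2d+1))/(10d)$, without the extra ``$1+$''; your numerical value $c(1)\approx 0.56$ is consistent with this corrected formula.)
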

\begin{proof}
Define $A_k$ as the event that there exists a path $\gamma \in \Gamma_{t,0}$ with $W(\gamma) = k$. To bound the probability of $A_k$, notice that there are $(2d+1)^k$ possible ways to choose the locations $x_1,\dots,x_k$ for the updates on the path $\gamma$, and that all of these updates must occur at time less than $t.$ Moreover, other than the $k$ specified updates, no others must occur. As the waiting times between each consecutive update is Exponential$(1)$, and by the memoryless property, the probability that the path in $\Gamma_{t,0}$ with locations $x_1,\dots,x_k$, sees exactly $k$ updates is $\Prob(\text{Poi}(t) = k).$ As a result, $\Prob(A_k) \leq (2d+1)^k\Prob(\text{Poi}(t) = k) \leq (2d+1)^k\Prob(\text{Poi}(t) \leq k)$. \\

The lower tail of the Poisson can be bounded above by Lemma 1.2 of \cite{penrose2003random}, which states that for $x \leq t$,
\[
\Prob(\text{Poi}(t) \leq x) \leq \exp(-t G(x/t))
\]
where $G(x) = 1 - x + x \log x.$ Now, union bound the probability of the event $A$:

\begin{align*}
\Prob(A) & \leq \sum_{k=0}^{t/(10d)} \Prob(A_k)\\
         & \leq \frac{t}{10d}(2d+1)^{t/(10d)}\Prob(\text{Poi}(t) \leq \frac{t}{10d}) \\
         & \leq  \exp\left( \left[\frac{\log(2d+1)}{10d} + \frac{\log 10d}{10d} + \frac{1}{10d} - 1)\right]t  + \log\frac{t}{10d}\right) \\
         & \leq \exp\left( \left[\frac{\log(2d+1)}{10d} + \frac{\log 10d}{10d} + \frac{1}{10d} - 1 \right]t  + \frac{t}{10d} \right)
\end{align*}
which for $t$ large enough is of the form $e^{-c_dt}$ for $c_d = 1 - \frac{\log(2d+1)}{10d} - \frac{\log 10d}{10d} - \frac{2}{10d}  > 0.$ Since the function 
\[
\frac{\log(2d+1)}{10d} + \frac{\log 10d}{10d} + \frac{2}{10d}
\]
is decreasing in $d$ and for $d = 1$ its value is less than $0.5,$ we can lower bound $c_d$ by an absolute constant $c \in (0,1)$ for all dimensions.

\end{proof}

As a simple corollary, this lower bounds the expected value of the RSOS process.

\begin{corollary}
\label{cor:lbev}
Let $f_0(t)$ be the height of an RSOS surface at time $t$ at location $0$. Then, for some $t_0$ depending on $d$, for all $t \geq t_0$
\[
\E[f_0(t)] > C_dt
\]
for some constant $C_d$ depending on dimension.
\end{corollary}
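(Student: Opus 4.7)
The plan is to combine the minimum weight characterization from Prop.~\ref{prop:rsosminchar} with the exponential tail bound from Lemma~\ref{lemma:exponentially_smallprob_height} to obtain a linear lower bound on $\E[f(t)]$ after truncating to the complement of the bad event $A$. Since $f(t) = \RSOS(t,0)$ started from zero initial condition equals $\min_{\gamma \in \Gamma_{t,0}} W(\gamma)$, the event $\{f(t) \leq t/(10d)\}$ coincides with $A$, so on $A^c$ we deterministically have $f(t) > t/(10d)$.

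First I would write
\[
\E[f(t)] \geq \E\!\left[ f(t)\, \mathbf{1}_{A^c} \right] \geq \frac{t}{10d}\, \Prob(A^c) = \frac{t}{10d}\bigl(1 - \Prob(A)\bigr),
\]
using the nonnegativity of $f(t)$ for the first inequality and the observation above for the second. By Lemma~\ref{lemma:exponentially_smallprob_height}, $\Prob(A) \leq C e^{-ct}$ for some constants $C,c > 0$ depending on $d$, so there exists $t_0 = t_0(d)$ such that $\Prob(A) \leq 1/2$ for all $t \geq t_0$.

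Consequently, for $t \geq t_0$,
\[
\E[f(t)] \geq \frac{t}{10d}\cdot \frac{1}{2} = \frac{t}{20d},
\]
so the conclusion holds with $C_d := 1/(20d)$.

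There is no real obstacle here since Lemma~\ref{lemma:exponentially_smallprob_height} already does all the heavy lifting; the only thing to be slightly careful about is ensuring nonnegativity of $f(t)$ (which is immediate because $W(\gamma) \geq 0$ and the initial condition is zero, so the minimum-weight path starting at $(t,0)$ has nonnegative weight). The rest is just a clean Markov-type truncation argument on the event $A^c$.
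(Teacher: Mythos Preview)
Your proof is correct and follows essentially the same approach as the paper's own argument: identify $\{f(t) \leq t/(10d)\}$ with the event $A$ via the minimal path characterization, truncate to $A^c$, and use Lemma~\ref{lemma:exponentially_smallprob_height} to bound $\Prob(A)$. In fact your write-up is slightly cleaner than the paper's, which contains a typo (it writes $\mathbf{1}_A$ where $\mathbf{1}_{A^c}$ is intended).
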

\begin{proof}
As above, let $A$ be the event that there exists path $\gamma$ from $(t,0)$ to $(0,\bb{Z}^d)$ with weight $W(\gamma) \leq t/(10d).$ Note that $\set{f_0(t) \leq t/(10d)} = A$ by the minimal path characterization of RSOS. Using the positivity of $f_0(t)$, 
\begin{align*}
    \E[f_0(t)] & \geq \E[f_0(t)\mathbf{1}_{A^c}] \\
             & \geq \frac{t}{10d}\Prob(A^c) \\
             & \geq \frac{t}{10d}\cdot (1 - e^{-ct}),
\end{align*}
which for $t$ large enough is greater than $t/(20d),$ say.
\end{proof}

\section{Dual Representations of RSOS}

The RSOS process turns out to have a dual representation, in terms of a dual RSOS process started from the initial condition $\norm{x}_1$. The latter is distributionally related to the original RSOS process itself: the minimum of the dual RSOS process has the same distribution as the RSOS surface height at location $0.$ It is interesting that the RSOS process admits this dual characterization, as ballistic deposition has a similar feature: the maximum of the analogous dual process in ballistic deposition has the same distribution as the height of the original process. This is recorded in \cite{penrose2008growth} as Theorem 2.3. Our proposition below mirrors this result.

\label{sec:dual_rep}
\begin{definition}[Dual RSOS process]
Define the RSOS dual process as the surface growth model with height function $f^D(t,x)$, which has initial condition
\[
f^D(0,x) = \norm{x}_1,
\]
and updates in the same way as RSOS.
\end{definition}

Figure \ref{fig:dualrsos} gives an example of the dual RSOS process.

\begin{figure}
    \centering
    \includegraphics[width = 0.9\textwidth]{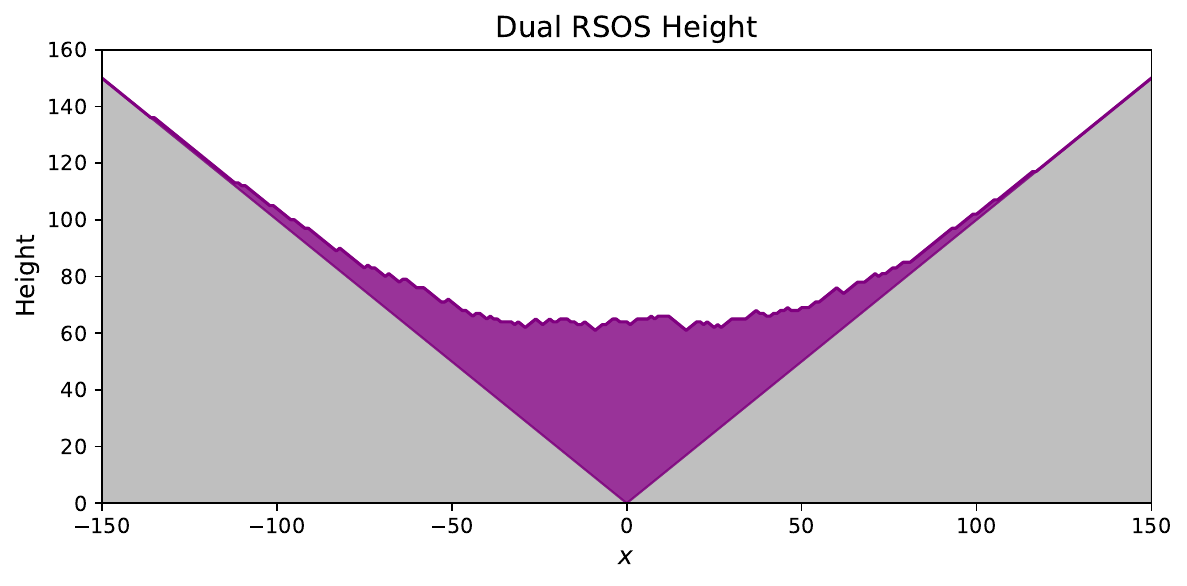}
    \caption{Visualization of surface height of the dual RSOS process, in $1 + 1$ dimensions.}
    \label{fig:dualrsos}
\end{figure}

Associated with a dual RSOS process, we may also consider the set of accepted updates as in Def. \ref{def:acceptedupdates} as those which change the height.

\begin{proposition}
\label{prop:duality}
Let $f(t,x),f^D(t,x)$ denote the height functions of an RSOS and dual RSOS surface respectively, at some fixed $t,x$. Then
\[
f(t,0) \stackrel{(d)}{=} \min_{y \in \bb{Z}^d} f^D(t,y).
\]
\end{proposition}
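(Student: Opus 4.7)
The plan is to combine Prop.~\ref{prop:rsosminchar} with two structural facts: time-reversal of the Poisson update process leaves its law invariant, and the ordinary RSOS surface started from zero is one-Lipschitz in the $\ell^1$ metric. These two inputs, chained through the minimum-path formula for the dual process, force the desired distributional equality.

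First I would rewrite the left-hand side using the minimum-path characterization. Since $H(x) = \norm{x}_1$ has adjacent differences bounded by one, Prop.~\ref{prop:rsosminchar} gives
\[
f^D(t,y) = \min_{\gamma \in \Gamma_{t,y}} \bigl[ W(\gamma) + \norm{\gamma(0)}_1 \bigr],
\]
and pooling the outer minimum over $y$ with the inner minimum over paths yields
\[
\min_{y \in \bb{Z}^d} f^D(t,y)(\cl{U}) = \min_{y, y' \in \bb{Z}^d} \; \min_{\gamma : (t,y) \to (0,y')} \bigl[ W(\gamma) + \norm{y'}_1 \bigr],
\]
where the inner minimum ranges over backward lattice paths in $\cl{U}$.

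Next I would introduce the time-reversed update process $\tilde{\cl{U}} := \set{(t-s,z) : (s,z) \in \cl{U}}$ on $[0,t] \times \bb{Z}^d$, which has the same law as $\cl{U}$ on the same window. The map $\gamma \mapsto \tilde{\gamma}$ with $\tilde{\gamma}(s) := \gamma(t-s)$ is a weight-preserving bijection between backward lattice paths under $\cl{U}$ from $(t,y)$ to $(0,y')$ and backward lattice paths under $\tilde{\cl{U}}$ from $(t,y')$ to $(0,y)$; symmetry of $\cl{N}_0$ under negation guarantees the reversed jumps are still admissible. Substituting and relabeling the dummy variables gives
\[
\min_{y} f^D(t,y)(\cl{U}) = \min_{a \in \bb{Z}^d} \Bigl[ \norm{a}_1 + \min_{\tilde{\gamma} \in \Gamma^{\tilde{\cl{U}}}_{t,a}} W(\tilde{\gamma}) \Bigr] = \min_{a \in \bb{Z}^d} \bigl[ \norm{a}_1 + f(t,a)(\tilde{\cl{U}}) \bigr],
\]
where the last identity is Prop.~\ref{prop:rsosminchar} applied in reverse with zero initial condition. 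Because $\tilde{\cl{U}} \stackrel{d}{=} \cl{U}$, this expression is equal in distribution to $\min_{a} \bigl[ \norm{a}_1 + f(t,a)(\cl{U}) \bigr]$.

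To close the argument I would invoke the one-Lipschitz property of the ordinary RSOS surface (stated in Sec.~\ref{sec:rsos}): $|f(t,a) - f(t,0)| \leq \norm{a}_1$ pointwise, which forces $\norm{a}_1 + f(t,a) \geq f(t,0)$, while $a = 0$ attains equality. Hence $\min_{a} [\norm{a}_1 + f(t,a)] = f(t,0)$ almost surely, and chaining the identities delivers $\min_y f^D(t,y) \stackrel{d}{=} f(t,0)$. The delicate step is the time-reversal bijection: the right-continuity convention in Def.~\ref{def:latticepath} and the constraint $\gamma(s) - \gamma(s^-) \in \cl{N}_0$ must be reconciled with the reversed direction of traversal, but this is a routine left/right continuity adjustment made painless by the symmetry of $\cl{N}_0$, rather than a genuine obstacle.
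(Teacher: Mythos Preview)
Your route is genuinely different from the paper's: the paper introduces \emph{pyramids} and \emph{dual pyramids} of updates, shows that $f(t,0)$ and $\min_y f^D(t,y)$ are each the maximal height of the corresponding pyramid family, and then observes that time reversal $s\mapsto t-s$ bijects pyramids with dual pyramids of the same height. Your path-based strategy together with the one-Lipschitz observation $\min_a[\norm{a}_1+f(t,a)]=f(t,0)$ is more economical in spirit, and the final Lipschitz step is a nice touch that the paper does not use.

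However, the time-reversal bijection is not the routine bookkeeping you suggest. The symmetry of $\cl{N}_0$ under negation handles the jump \emph{direction} $\gamma(s)-\gamma(s^-)\in\cl{N}_0$, but Def.~\ref{def:latticepath} also demands that the update sit at the \emph{post}-jump value $\gamma(s)$. Under $s\mapsto t-s$ this becomes the \emph{pre}-jump value, so the (right-continuous modification of the) reversed path $\tilde\gamma$ has its jump-inducing update at $\tilde\gamma(s^-)$, not at $\tilde\gamma(s)$, and is therefore not a Def.~\ref{def:latticepath} lattice path under $\tilde{\cl{U}}$. Concretely, take $d=1$, $t=2$, and $\cl{U}$ a single update at $(1,0)$; then $\tilde{\cl U}=\cl U$. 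There is a backward lattice path under $\cl U$ from $(2,0)$ to $(0,-1)$ of weight $1$ (sit at $0$, jump to $-1$ at the update), but there is \emph{no} backward lattice path under $\tilde{\cl U}$ from $(2,-1)$ to $(0,0)$ at all, since no update ever occurs at position $-1$. So the claimed endpoint-swapping, weight-preserving bijection fails, and with it the pointwise identity $\min_{\gamma:(t,y)\to(0,y')}W(\gamma)=\min_{\tilde\gamma:(t,y')\to(0,y)}W(\tilde\gamma)$ in general.

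The doubly-minimized identity you actually need, $\min_{y,y'}[\,W+\norm{y'}_1\,]=\min_{a,b}[\,\norm{a}_1+W\,]$, does appear to survive (and indeed holds in all small examples), but establishing it requires an argument beyond ``swap left/right limits'': one must either enlarge the path class to something genuinely time-symmetric and redo Prop.~\ref{prop:rsosminchar} for that class, or pass through an intermediate combinatorial object that is manifestly reversal-invariant---which is exactly what the paper's pyramid construction supplies.
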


By translation invariance, distributional equalities for $f(t,y)$ hold for any $y$ in the above statement. The proof proceeds by considering the set of RSOS accepted updates as an object itself. Essentially, the set of RSOS accepted updates defines a kind of `pyramid'; the size of the largest pyramid structure is equivalent to the RSOS surface height. A similar statement holds for dual RSOS and `upside-down' pyramids. Then time inversion establishes a correspondence between these pyramid structures. 

\begin{definition}[Pyramid]
We call a set of updates a \textbf{pyramid} $\cl{P}_{x_0}$ centered at $x_0$ if it is of the form 
\[
\cl{P}_{x_0} = \bigcup_{k = 1}^h L_k
\]
where $L_k$ is a set of updates such that the set of locations $\pi_{x}(L_k)$ is the $\ell_1$- ball in $\bb{Z}^d$ of norm $(h - k) + 1,$ centered at $x_0$. Call $L_k$ the $k$th layer. Moreover, for every $k > 1$ and every $u \in L_k$, $u$ must appear after its neighbors in the previous layer. That is, $\pi_{t}(\bu) > \pi_{t}(\bv)$ for every $\bv \in L_{k-1}$ such that $\pi_x(\bv) - \pi_x(\bu) \in \cl{N}_0.$ The \textit{height} of $\cl{P}_{x_0}$ is equal to $h,$ the number of layers. Say $\cl{P}_{x_0}$ is \textit{within time $t$} if all the updates in $\cl{P}_{x_0}$ occur before time $t$.
\end{definition}

Thus, the last layer of the pyramid has one update, while the penultimate layer of the pyramid as $2d + 1$ updates. Figure \ref{fig:pyramid} gives an example of a pyramid.

\begin{figure}
    \centering
    \includegraphics[width=0.9\textwidth]{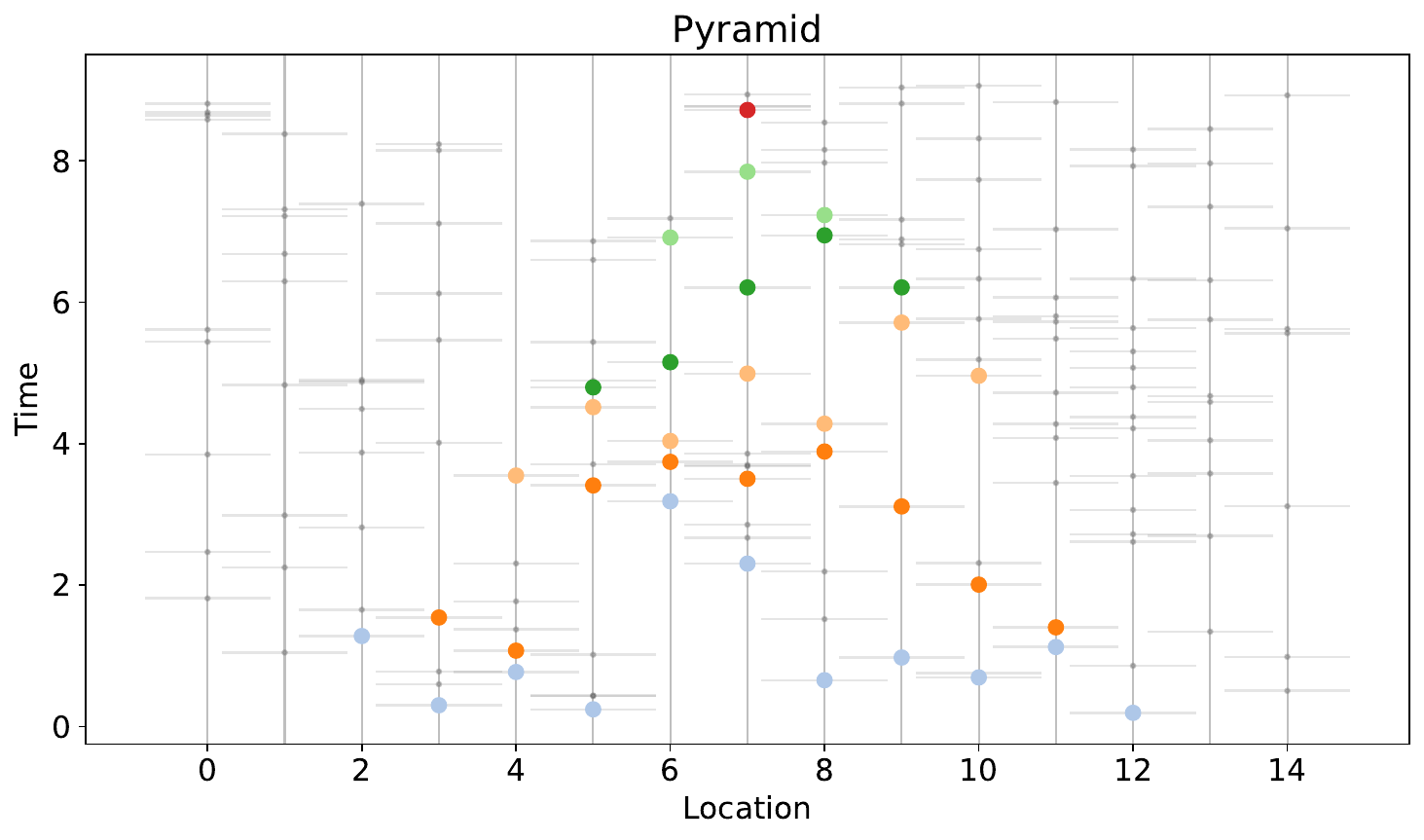}
    \caption{The colored updates indicate a pyramid centered at $x = 7$ with height $6$, which corresponds to the accepted updates of an RSOS process. As it corresponds to the accepted updates of an RSOS process, it also has the maximal height among all pyramids centered at $x = 7$ within time $9$. The colors correspond to the layers of the pyramid.}
    \label{fig:pyramid}
\end{figure}

Similarly, if one looks at the accepted updates of the dual RSOS surface, it forms an ``upside-down pyramid''. 

\begin{definition}
We call a set of updates a \textbf{dual pyramid} within time $T$ if under the time reversal of the random lattice $t \mapsto T - t$, the set forms a pyramid within time $T.$ A dual pyramid also has layers which are defined by the layers of its time reversal. Call the first layer $L_1$ of the dual pyramid the layer with exactly one update.
\end{definition}

The other direction holds true: a pyramid, under time reversal, forms a dual pyramid. This is clear as time reversal is an involution. \\

Let $\scr{P}_{h,x_0}(t)$ and $\scr{P}_{h,x_0}^D(t)$ denote the set of pyramids and dual pyramids respectively of height $h,$ within time $t$, centered at $x_0.$ For the edge cases, define $\scr{P}_{0,x_0}(t) = \scr{P}^D_{0,x_0}(t) = \emptyset$. The equality in distribution relies on the following characterization of RSOS and the dual.

\begin{proposition}
\label{prop:rsospyramidcharacterization}
The height of the RSOS process, with zero initial condition, is equal to the height of the largest pyramid centered at $x_0.$ That is,
\begin{equation}
\label{eq:rsos_maxpyramid} 
f(t,x_0) = \max \set{h: |\scr{P}_{h,x_0}(t)| > 0}.
\end{equation}
Similarly,
\begin{equation}
\label{eq:dualrsos_maxdualpyramid} 
\min_{y \in \bb{Z}^d} f^D(t,y) = \max \set{h: |\scr{P}^D_{h,0}(t)| > 0}.
\end{equation}
If there are no pyramids (resp. dual pyramids) of positive height, then we again interpret the right hand sides of \eqref{eq:rsos_maxpyramid}, \eqref{eq:dualrsos_maxdualpyramid} to be zero.
\end{proposition}

\begin{proof}[Proof of Prop. \ref{prop:rsospyramidcharacterization}]
We first establish Eq. \eqref{eq:rsos_maxpyramid}. Consider first the case where there are no pyramids of positive height. Because of the zero initial condition, the first update over $x_0$ is an accepted update. A single accepted update, by definition, as a pyramid of height one at $x_0$. Thus, when there are no pyramids of positive height, no updates have occurred over $x_0$. Thus $f(t,x_0) = 0.$

\sloppy Consider now the general case. One direction, $f(t,x_0) \leq  \max \set{h: |\scr{P}_{h,x_0}(t)| > 0}$, follows since the set of accepted updates of the RSOS surface is a pyramid of height equal to the surface height $f(t,x_0)$. This can be seen as the condition on every update in the pyramid mirrors the RSOS condition in Lemma \ref{lemma:rsosacceptedupdates}. In particular, construct such a
pyramid by first adding the last accepted
updates over $x_0$ with time $\leq t$. This update comprises the last layer of the pyramid. Then, inductively add the foundations of each update in layer $L_k$ to create layer $L_{k-1}$.

For the converse, consider the set $\scr{P}_{h,x_0}(t)$ for which $h$ achieves the maximum in the equality. Take any $\cl{P}_{x_0} \in \scr{P}_{h,x_0}(t)$. Given $\cl{P}_{x_0},$ there exists another $\cl{P}_{x_0}' \in \scr{P}_{h,x_0}(t)$ (possibly equal to $\cl{P}_{x_0}$) which is contained in the set of accepted RSOS updates. This implies the reverse inequality. 

We accomplish this by iteratively ``pushing down'' each layer of the pyramid $\cl{P}_{x_0}$ in a way that it still satisfies the local RSOS condition. The construction goes as follows. First let $L'_k$ denote the $k$th layer of the pyramid $\cl{P}_{x_0}'$. Define $L_1'$ to be the set of updates $\bu = (u_t,u_x)$ such that $u_x \in \pi_x(\cl{P}_{x_0})$ and $\bu$ is the first update over $u_x.$ Notice that such updates $\bu$ may not be in $\cl{P}_{x_0}.$ \\

Now to iteratively define $L'_k$ for $k \geq 2,$ consider for any update $\bu = (u_t,u_x) \in L_k$ in the original pyramid $\cl{P}_{x_0}$. Let $\cl{G}$ be the set of updates in the previous layer with locations adjacent to $\bu$; that is, the $2d+1$ updates $\bv$ in $L'_{k-1}$ with  $\pi_x(\bv) - \pi_x(\bu)  \in \cl{N}_0$, and let $t_0 = \max \pi_t(\cl{G})$ be the maximum update time over all updates in $\cl{G}$. Define the \textbf{pushdown} map $d_k(\bu)$ to be the first update at location $u_x$ with time greater than $t_0$. Then, iteratively define the next layer of the pyramid as 
\[
L'_k = \set{d_k(\bu): \bu \in L_k}.
\]
Notice that the resulting set is a pyramid, because by construction, each update in a layer $L_k'$ occurs after updates in $L_{k-1}'$ at adjacent locations. Further, the update times of each layer in $\cl{P}'_{x_0}$ are less than that of $\cl{P}_{x_0}$. This shows $\cl{P}_{x_0}'$ has height equal to the height of $\cl{P}_{x_0}.$ Moreover a simple induction on the layer index $k$ shows that $L'_k$ are RSOS accepted updates, using the characterization in Lemma \ref{lemma:rsosacceptedupdates}.  

Next let us establish Eq. \eqref{eq:dualrsos_maxdualpyramid}. Again, consider the case where there are no dual pyramids of positive height. By the dual RSOS initial condition, this is the event that no update has fallen over $0$. Thus $\min_{y \in \bb{Z}^d} f^D(t,y) = 0.$
In the general case, we will show both sides of the inequality. To show 
\[
\min_{y \in \bb{Z}^d} f^D(t,y) \leq \max \set{h: |\scr{P}^D_{h,0}(t)| > 0},
\]
let $h = \min_{y \in \bb{Z}^d} f^D(t,y)$.  The set of accepted updates with height $\leq h$ (those which increase the surface height to a value less than or equal to $h$) is a dual pyramid. Since dual RSOS has initial condition defined by $\norm{x}_1$, it is clear that the set of accepted updates at height $k$ for $k=1,\dots,h$ is the $\ell_1$ ball centered at $0$ of radius $k$.

We must also show that the time reversal of this set is a pyramid; it suffices to show that for every accepted update $\bu$ at height $k$ for $k=1,\dots,h-1$, the set of accepted updates with height $k+1$ with adjacent locations all occur after $\pi_t(\bu).$ But this is enforced by the RSOS surface growth dynamics, as in Lemma \ref{lemma:rsosacceptedupdates}. \\

For the other inequality $\min_{y \in \bb{Z}^d} f^D(t,y) \geq \max \set{h: |\scr{P}^D_{h,0}(t)| > 0}$, apply a similar argument as before. Consider the set $\scr{P}^D_{h,0}(t)$ for which $h$ achieves the maximum in the equality. Take any dual pyramid $\cl{P}_0 \in \scr{P}^D_{h,0}(t)$. Given $\cl{P}_0,$ there exists another $\cl{P}_0' \in \scr{P}^D_{h,0}(t)$ (possibly equal to $\cl{P}_0$) which is contained in the set of accepted dual RSOS updates. Since the layers of the dual pyramid are $\ell_1$-balls, this implies that  
\[
h \leq \min_{y \in \bb{Z}^d} f^D(t,y).
\]

To construct $\cl{P}_0'$, apply the previously-defined pushdown maps $d_k$ layer by layer to $\cl{P}_0.$ It suffices to show that this procedure creates a dual pyramid. As before, this amounts to showing that for every update $\bu$ in $L_k'$, the adjacent updates at the next layer $L_{k+1}'$ occur after $\bu_t.$ But this is enforced by the definition of the pushdown map $d_k.$ A simple inductive argument shows that each layer belongs to the set of accepted updates of the dual RSOS process, using an analog of 
Lemma \ref{lemma:rsosacceptedupdates}.
\end{proof}

\begin{proof}[Proof of the equality in distribution for dual representation]
Using the distributional invariance of the update Poisson process under time reversal $s \mapsto t - s$, note that
\begin{align*}
    f(t,x_0) & = \max \set{h: |\scr{P}_{h,x_0}(t)| > 0} \\
               & \stackrel{(d)}{=} \max \set{h: |\scr{P}^D_{h,0}(t)| > 0}\\  
               & = \min_{y \in \bb{Z}^d} f^D(t,y).
\end{align*}    
The first and third lines follow by the previous lemma, while the second line follows by the symmetry: every pyramid under the time reversal creates a dual pyramid of the same height.
\end{proof}

\section{Dual RSOS Growth Rate}

\label{sec:dual_growth_rate}
The minimum of the dual RSOS process turns out to grow at a linear rate. This analysis is needed to establish the log lower bound on fluctuations in one dimension in the next section. The growth rate being almost surely linear is not a surprise, due to the subadditivity of the process; the main tool here is the Kesten-Hammersley theorem.

\begin{lemma}[Kesten-Hammersley, Thm. 2.9 \cite{smythe2006first}]
\label{lemma:kesten_hammersley}
Let $\set{X_n}_{n \in \bb{N}}$ be a sequence of random variables with distribution functions $F_n$ and finite second moments. Suppose that for each pair $(m,n)$ of positive integers with $m < n$, $F_{n+m} \geq F_m * F_n$, where $*$ denotes the convolution operator. Furthermore, suppose for every such pair $(m,n)$ there exists a random variable $X_{mn}'$ satisfying
\begin{enumerate}
    \item $X'_{mn}$ has distribution function $F_n$.
    \item $X_m$ and $X_{mn}'$ are independent.
\end{enumerate}
Then if  $(X_n)_n$ is a monotone sequence in $n$, there exists a constant $\gamma$ such that 
\[
\frac{X_n}{n} \rightarrow \gamma \text{ a.s.}
\]
\end{lemma}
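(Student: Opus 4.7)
The plan is the classical two-step route for stochastic subadditive ergodic theorems: first Fekete convergence of the means, then a concentration estimate along a sparse subsequence, with monotonicity used to interpolate to all $n$.

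First, reading condition (3) as the stochastic dominance $X_{m+n} \leq_{\mathrm{st}} X_m + X'_{mn}$ and taking expectations (using independence from (2)) gives $a_{m+n} \leq a_m + a_n$, where $a_n := \E[X_n]$. Fekete's subadditive lemma then yields $\gamma := \lim_n a_n/n = \inf_n a_n/n$, and finite second moments together with monotonicity of $X_n$ keep $\gamma > -\infty$.

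Next, iterate (3) (using transitivity of stochastic dominance) to get $X_{kn} \leq_{\mathrm{st}} S_k^{(n)} := \sum_{i=1}^{k} Y_i^{(n)}$ with $Y_i^{(n)}$ i.i.d.\ having distribution $F_n$. Assuming $X_n \geq 0$ (which follows from monotonicity after subtracting the base value), stochastic dominance of nonnegative variables lets us push through the square $x \mapsto x^2$ to obtain $\E[X_{kn}^2] \leq \E[(S_k^{(n)})^2] = k\,\Var(X_n) + k^2 a_n^2$, hence
\[
\Var(X_{kn}) \leq k\,\Var(X_n) + \bigl(k^2 a_n^2 - a_{kn}^2\bigr).
\]
Because $a_n/n \to \gamma$ and $a_{kn}/(kn) \to \gamma$, the residual gap $k^2 a_n^2 - a_{kn}^2 = (k a_n - a_{kn})(k a_n + a_{kn})$ is $o((kn)^2)$ as $n \to \infty$, so $\Var(X_N)/N^2 \to 0$ along $N = kn$. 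Chebyshev and Borel--Cantelli applied along a geometric subsequence $n_j = \lfloor \theta^j \rfloor$ for any $\theta > 1$ then deliver $X_{n_j}/n_j \to \gamma$ almost surely.

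Finally, monotonicity interpolates: for $n_j \leq n \leq n_{j+1}$,
\[
\frac{n_j}{n_{j+1}} \cdot \frac{X_{n_j}}{n_j} \leq \frac{X_n}{n} \leq \frac{n_{j+1}}{n_j} \cdot \frac{X_{n_{j+1}}}{n_{j+1}},
\]
and letting $\theta \downarrow 1$ squeezes both ratios to $\gamma$. The main obstacle I anticipate is the concentration step: stochastic dominance does not control variances in general, so one has to exploit nonnegativity to pass dominance through the square, and then close the gap $k^2 a_n^2 - a_{kn}^2$ quantitatively enough for Borel--Cantelli summability along $n_j$. The Fekete convergence rate of $a_n/n \to \gamma$ is what ultimately has to do this work, and making the two scales $k$ and $n$ cooperate is the delicate point of the argument.
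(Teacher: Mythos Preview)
The paper does not prove this lemma at all; it is quoted verbatim from Smythe--Wierman and used as a black box. So there is no in-paper argument to compare against, and what follows concerns only the internal soundness of your outline, which is indeed the classical route.

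The Fekete step and the monotone interpolation step are fine. The real gap is exactly where you flag it, but it is more serious than a matter of ``rate'': from $\Var(X_N)/N^2 \to 0$ alone you \emph{cannot} invoke Borel--Cantelli along $n_j=\lfloor\theta^j\rfloor$, since summability of $\sum_j \Var(X_{n_j})/n_j^2$ does not follow from the terms tending to zero (e.g.\ $\Var(X_N)/N^2\sim 1/\log N$ would fail). What actually closes the argument is a dyadic \emph{telescoping} of the variance recursion, not a Fekete rate. Taking $m=n$ in your second-moment inequality gives
\[
\Var(X_{2n}) \le 2\Var(X_n) + 4a_n^2 - a_{2n}^2,
\]
and with $\alpha_j:=a_{2^j}/2^j$ (nonincreasing with limit $\gamma$) the correction is $4^{j+1}(\alpha_j^2-\alpha_{j+1}^2)$. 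Iterating and summing in $k$ yields
\[
\sum_{k\ge 0}\frac{\Var(X_{2^k})}{4^k}\ \le\ 2\Var(X_1)+2\bigl(\alpha_0^2-\gamma^2\bigr)\ <\ \infty,
\]
which is precisely the summability needed at $\theta=2$. The point is that the errors telescope; no quantitative control on how fast $a_n/n\to\gamma$ is required, contrary to what your last paragraph anticipates.

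A second, smaller issue: pushing stochastic dominance through $x\mapsto x^2$ needs the variables to live on a half-line where $x^2$ is monotone. Your remedy ``subtract the base value'' does not work, because $X_1$ is random and subtracting it destroys the independence in condition~(2). In practice the lemma is stated (and used in this paper, for hitting times) with $X_n\ge 0$; you should simply add that hypothesis rather than try to manufacture it from monotonicity.
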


This result can be extended to random variables $\set{X_{mn}}$ with distributions that are subconvolutive (where Condition (3) above is replaced with $F_{n+m} \leq F_m * F_n$) by considering $\set{-X_{mn}}.$

\subsection{Almost sure limit for the dual RSOS process}
\begin{definition}
As shorthand define the quantities
\begin{align*}
    M_t & := \inf_{x \in \bb{Z}^d} f^D(t,x) \\
    T(u) & := \inf \set{t \geq 0: M_t \geq u}, u \in \bb{N},
\end{align*}
the minimum dual RSOS height and its hitting times.
\end{definition}

\begin{lemma}
\label{lemma:dualrsossubconvolutive}
Let $\set{M_t}_{t \in \bb{R}}$ be the minimum height of a growing dual RSOS process $\set{A(t,x)}_{t,x}$ as above, and let $\set{T(u)}_{u \in \bb{N}}$ be the collection of associated first hitting times. Denote by $F_{T(u)}$ the distribution function of $T(u).$ Then for all $u,v \in \bb{N}$, 
\[
F_{T(u+v)} \leq F_{T(u)} * F_{T(v)}.
\]
\end{lemma}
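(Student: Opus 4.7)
The plan is to apply the strong Markov property of the Poisson update process at the stopping time $T(u)$ and to bound the post-$T(u)$ increment $T(u+v) - T(u)$ from below by an independent copy of $T(v)$, via a monotonicity argument based on the minimal-path characterization of RSOS (Prop.\ \ref{prop:rsosminchar}). Once such a coupling is established, writing $T(u+v) \geq T(u) + \widetilde{T}$ with $T(u) \sim F_{T(u)}$, $\widetilde{T} \sim F_{T(v)}$, and $T(u) \perp \widetilde{T}$ immediately yields
\[
F_{T(u+v)}(t) \leq \Prob(T(u) + \widetilde{T} \leq t) = (F_{T(u)} \ast F_{T(v)})(t).
\]

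By the strong Markov property of the Poisson process at $T(u)$, the time-shifted point process $\cl{U}' := \{(t - T(u), x) : (t, x) \in \cl{U},\ t \geq T(u)\}$ is independent of the $\sigma$-algebra $\cl{G}_{T(u)}$ generated by the clock rings up to time $T(u)$ and is distributed as $\cl{U}$. Pick (measurably) a minimizer $y^* \in \bb{Z}^d$ of $A(T(u), \cdot)$, so that $A(T(u), y^*) = u$; the minimum is attained because $A(T(u), x) \geq \|x\|_1$. The RSOS Lipschitz property then gives $A(T(u), x) \leq u + \|x - y^*\|_1$ for every $x$.

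Next, define $A^{(u)}(s, x) := A(T(u)+s, x) - u$; since the RSOS acceptance rule depends only on adjacent differences, $A^{(u)}$ is itself a dual-RSOS surface driven by $\cl{U}'$, with initial data bounded pointwise above by $\|x - y^*\|_1$. Also let $B^{(y^*)}$ be the dual-RSOS process driven by the same $\cl{U}'$, started from $B^{(y^*)}(0, x) = \|x - y^*\|_1$. By Prop.\ \ref{prop:rsosminchar}, both processes admit the representation $\min_{\gamma}\bigl[W(\gamma) + H(\gamma(0))\bigr]$, where $H$ is the respective initial datum and $W(\gamma)$ depends only on $\cl{U}'$; the pointwise inequality on initial data therefore propagates forward, giving $A^{(u)}(s, \cdot) \leq B^{(y^*)}(s, \cdot)$ for every $s \geq 0$. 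Taking infima over $x$,
\[
T(u+v) - T(u) = \inf\{s : \min_x A^{(u)}(s, x) \geq v\} \geq \inf\{s : \min_x B^{(y^*)}(s, x) \geq v\} =: \widetilde{T}.
\]

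It remains to identify the law of $\widetilde{T}$. For every deterministic value $y_0$, the re-centered process $\{B^{(y^*)}(s, x + y_0)\}_{s, x}$ has initial condition $\|x\|_1$ and is driven by the spatial translate of $\cl{U}'$ by $-y_0$, which has the same law as $\cl{U}'$; so conditional on $\{y^* = y_0\}$, the minimum-hitting time of this re-centered process to level $v$ is $F_{T(v)}$-distributed and independent of $\cl{G}_{T(u)}$. Averaging over $y^*$ yields $\widetilde{T} \sim F_{T(v)}$ with $\widetilde{T} \perp \cl{G}_{T(u)}$, and hence $\widetilde{T} \perp T(u)$. The step I expect to be most delicate is precisely this independence claim: because $y^*$ is $\cl{G}_{T(u)}$-measurable, combining the strong Markov property in time with spatial translation invariance of $\cl{U}$ requires verifying carefully that the conditional law of $\widetilde{T}$ given $\cl{G}_{T(u)}$ does not depend on $y^*$.
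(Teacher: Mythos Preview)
Your proof is correct and follows essentially the same coupling strategy as the paper: stop at $T(u)$, dominate the post-$T(u)$ evolution of the dual process by a fresh dual RSOS re-centered at the minimizer $y^*$, and use independence of the post-$T(u)$ clocks together with spatial translation invariance to identify the law of the new hitting time as $F_{T(v)}$. The only substantive difference is that you deduce the monotonicity $A^{(u)}\le B^{(y^*)}$ directly from the minimal-path representation (Prop.\ \ref{prop:rsosminchar}), whereas the paper invokes the lower bound of Prop.\ \ref{prop:rsosheightboundedbysingleupdate}; your route is arguably cleaner since it gives monotone dependence on the initial datum without any constraint on the size of the gap.
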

\begin{proof}
Fix $u,v \in \bb{N}, u \leq v.$ We claim there is a random variable $T^*_u(v)$ such that: 
\begin{itemize}
    \item $T^*_u(v)$ is equal in distribution to $T(v)$
    \item $T^*_u(v)$ is independent of $T(u)$
    \item $T(u+v) \geq T(u) + T^*_u(v)$.
\end{itemize}

To construct this random variable, consider a coupling of the dual RSOS process $\set{A(t,x)}_{t,x}$ and a process, $\set{B(t,x)}_{t,x},$ defined as follows. Let $B(t,x)$ grow in the same manner as $A(t,x)$ up until the hitting time $T(u).$ Let $x_0$ denote the location of the update which was filled in at this time. That is, let $x_0 \in \bb{Z}^d$ be such that $B(T(u),x_0) = u,$ but $B(T(u)^-,x_0) = u-1.$ Now, for all locations on the lattice, increase the heights of $B(T(u),z)$ to be equal to $u + \norm{x_0 - z}_1$. Now after $T(u),$ let $B(t,x)$ grow according to the same update rule as $A(t,x)$. Couple both processes by growing them according to the same Poisson clocks $\cl{U}.$

Intuitively, $B$ follows $A$ up until $T(u),$ and then its height is increased to the same well initial condition as a dual RSOS process, centered at $x_0.$ Then $B$ continues to grow according to the dual RSOS update rule, coupled to the same Poisson clocks as $A.$ 

Let $T^*_u(v)$ be the time after $T(u)$ at which the minimum height of process $B$ first hits $u+v$:
\[
T^*_u(v) = \inf \set{t \geq T(u): \min_{x \in \bb{Z}^d} B(t,x) \geq u+v} - T(u).
\]
Because $B$ grows like a (spatially translated) dual RSOS process after time $T(u),$ starting at height $u$, $T^*_u(v) \stackrel{d}{=} T(v).$ Now, $T^*_u(v)$ and $T(u)$ are functions of Poisson clocks in disjoint sets, so that $T^*_u(v)$ is independent of $T(u)$ by the Strong Markov property \cite{zuyev2006strong}. It remains to be argued that 
\[
T(u+v) \geq T(u) + T^*_u(v).
\]
At time $T(u)$, upon updating the heights of $B$ to the well configuration, for all $x \in \bb{Z}^d,$ $B(T(u),x) \geq A(T(u),x).$ Indeed, by definition $B(T(u),x_0) = A(T(u),x_0)$, where recall $x_0$ is the location of the update that occurred at time $T(u)$. By the RSOS condition (see the remark before Def. \ref{def:acceptedupdates}), we must have that $|A(T(u),x_0) - A(T(u),x)| \leq \norm{x_0 - x}_1.$ By construction, $B(T(u),x) = B(T(u),x_0) + \norm{x - x_0}_1$, and so the inequality $B(T(u),x) \geq A(T(u),x)$ follows.

Now by the first inequality in Proposition \ref{prop:rsosheightboundedbysingleupdate}, this implies that $B(t,x) \geq A(t,x),\forall x,$ for all $t \geq T(u).$  As a result, the minimum height of $B$ must hit $u+v$ before $A$. By construction, $\min_{x \in \bb{Z}^d} B(s,x) \geq u+v$ at the time $s = T_u^*(v) + T(u),$ while the hitting time of the minimum of process $A$ is by definition $T(u+v).$ Hence $T(u+v) \geq T(u) + T^*_u(v)$ as desired.

This last equation implies the desired claim:
\[
\Prob(T(u+v) \leq x) \leq \Prob(T(u) + T^*_u(v) \leq x),
\]
so that 
\[
F_{T(u+v)} \leq F_{T(u)} * F_{T(v)}
\]
using $T^*_u(v) \stackrel{d}{=} T(v)$ and $T^*_u(v) \perp T(u)$.
\end{proof}

\begin{corollary}
\label{cor:duallineargrowthrate}
There exists a constant $\rho \in (0,\infty)$ such that
\begin{align*}
    & \lim_{u \rightarrow \infty} \frac{T(u)}{u} = \rho \ a.s. \\
    & \lim_{t \rightarrow \infty} \frac{M_t}{t} = \rho^{-1} \ a.s.
\end{align*}
\end{corollary}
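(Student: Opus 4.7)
The plan is to apply the Kesten--Hammersley theorem to $\{T(u)\}_{u \in \bb{N}}$, after negating to match the theorem's subadditive sign convention (as flagged in the remark following the theorem statement). The subconvolutive inequality just established in Lemma \ref{lemma:dualrsossubconvolutive} becomes, for $-T(u)$, the required subadditive inequality; monotonicity of $T(u)$ in $u$ is immediate from monotonicity of $M_t$ in $t$. The nontrivial hypothesis to check is the finite second moment of $T(u)$, for which I would combine the dual representation with the tail estimate of Lemma \ref{lemma:exponentially_smallprob_height}:
\[
\Prob(T(u) > t) = \Prob(M_t < u) = \Prob(\RSOS(t,0) < u),
\]
using the equality in distribution from the previous section. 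For $t \geq 10 d u$ this is bounded by $\Prob(\RSOS(t,0) \leq t/(10d)) \leq C e^{-ct}$. Expressing $\E[T(u)^k] = \int_0^\infty k t^{k-1} \Prob(T(u) > t)\,dt$ and splitting the integral at $t = 10 d u$ yields finite moments of all orders. Kesten--Hammersley thus gives $T(u)/u \to \rho$ a.s.\ for some deterministic $\rho \in [0, \infty]$.

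To pin down $\rho \in (0, \infty)$ I would use two ingredients. The pathwise bound $\RSOS(t,0) \leq |\cl{U}_0|$ from the proof of Lemma \ref{lemma:poissonub}, combined with the dual representation, shows that $T(u)$ stochastically dominates a sum of $u$ i.i.d.\ $\mathrm{Exp}(1)$ waiting times; the strong law of large numbers then yields $\rho \geq 1$. Conversely, the above tail estimate gives $\E[T(u)] \leq 10 d u + O(1)$, and Fatou's lemma applied to the almost sure limit $T(u)/u \to \rho$ yields $\rho \leq 10 d$.

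The second limit follows by a standard renewal-type inversion. Since $T(u) < \infty$ a.s.\ for every $u$, the nondecreasing integer-valued process $M_t$ attains every integer level almost surely, so $M_t \to \infty$ a.s.\ as $t \to \infty$. The sandwich $T(M_t) \leq t < T(M_t + 1)$, immediate from the definitions, gives upon dividing through by $M_t$ that both outer ratios tend to $\rho$ a.s., forcing $t/M_t \to \rho$, equivalently $M_t/t \to \rho^{-1}$ a.s. The main obstacle is the finite-moment verification and the identification $\rho \in (0, \infty)$; both reduce cleanly to Lemma \ref{lemma:exponentially_smallprob_height} paired with the Poisson upper bound from Lemma \ref{lemma:poissonub}.
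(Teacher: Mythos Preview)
Your proposal is correct and follows the same architecture as the paper's proof: apply Kesten--Hammersley to $\{T(u)\}$ after negation, then invert via the sandwich $T(M_t) \leq t < T(M_t+1)$ to get the limit for $M_t/t$. The differences lie in the auxiliary verifications. For the finite second moment, the paper compares the dual process to a slower layer-by-layer filling process and dominates $T(u)$ by a finite sum of Gamma-type variables; your route through the distributional identity $M_t \stackrel{d}{=} \RSOS(t,0)$ and the exponential tail of Lemma~\ref{lemma:exponentially_smallprob_height} is more economical and reuses machinery already in place. For nondegeneracy of $\rho$, the paper argues by contradiction after the sandwich (ruling out $\rho=0$ via Lemma~\ref{lemma:poissonub} and $\rho=\infty$ via Borel--Cantelli with Lemma~\ref{lemma:exponentially_smallprob_height}), whereas you pin down $1 \leq \rho \leq 10d$ beforehand via stochastic domination and Fatou. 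Your argument for $\rho \geq 1$ is correct but compressed: the stochastic domination $T(u) \succeq \Gamma(u,1)$ combined with $T(u)/u \to \rho$ and $\Gamma(u,1)/u \to 1$ in probability forces $\rho \geq 1$, though this deduction deserves an explicit sentence rather than a bare appeal to the SLLN.
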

\begin{proof}

The first claim follows by the Kesten-Hammersley theorem Lemma \ref{lemma:kesten_hammersley}), provided the assumptions hold. Here, set $X_{m} = T(m)$ and $X_{mn}$ as the hitting time $T^*_m(n)$ associated to the dual RSOS process defined in the proof of Lemma \ref{lemma:dualrsossubconvolutive}. By that argument, $T^*_m(n)$ and $T(m)$ are independent, and $T^*_m(n)$ is equal in distribution to $T(n).$ Since the dual RSOS process $M_t$ is monotone and jumps by at most $1$, $T(n)$ is monotone increasing. 

It remains to show that $\E[T(u)^2] < \infty$ for all $u.$ Consider comparing the minimum of the dual RSOS process $M_t$ to the slower process where each layer must be filled in before updates at the next layer can be accepted. By the well initial condition for dual RSOS, each layer is an $\ell_1$ ball centered at the origin. Thus, each layer $j$ is filled in at a time $G_j$ which is the sum of at most $(2j+1)^d$ independent exponentials of mean $1,$ so the first time the slower process reaches minimum height $u$ is dominated by $\sum_{j=1}^{u} G_j.$ Hence, $T(u)$ is stochastically dominated by $\sum_{j=1}^{u} G_j.$ The sum has finite second moment because each $G_j$ has finite second moment, as $G_j$ is Gamma distributed. 


We conclude by Kesten-Hammersley (applied to the distributionally subadditive variables $\set{-X_{mn}}$) that there exists $\rho \in [0,\infty]$ such that 
\[
\lim_{u \rightarrow \infty} \frac{T(u)}{u} = \rho \ a.s.
\]

To establish that $\lim_{t \rightarrow \infty} \frac{M_t}{t} = \rho^{-1},$ note that by definition of $T(u)$ as the first hitting time of height $u,$

\begin{equation}
\frac{M_t}{T(M_t)} \geq \frac{M_t}{t} \geq \frac{M_t}{M_t + 1}\cdot \frac{M_t + 1}{T(M_t + 1)}.
\label{eq:sandwichinghittingtime}
\end{equation}

Note that $M_t \uparrow \infty$ almost surely. By sandwiching in equation \eqref{eq:sandwichinghittingtime} we obtain $\lim_{t \rightarrow \infty} \frac{M_t}{t} = \rho^{-1}$ provided $\rho > 0.$ To exclude the case that $\rho = 0,$ the second inequality of Eq. \eqref{eq:sandwichinghittingtime} would imply that $\lim_{t\rightarrow \infty} \frac{M_t}{t} = \infty$ almost surely. However, since $M_t$ is nonnegative, by Fatou's Lemma and duality (Proposition \ref{prop:duality}),
\begin{equation}
\label{eq:limit_dual_growth_is_finite}
\E \left[\lim_t \frac{M_t}{t} \right] \leq \liminf \frac{\E M_t}{t} = \liminf \frac{\E f(t,0)}{t}.
\end{equation}
Thus, the upper bound of Lemma \ref{lemma:poissonub} gives that $\E \left[\lim_t \frac{M_t}{t} \right] < \infty$ and so $\rho \neq 0$.

To exclude the $\rho = \infty$ case, Lemma \ref{lemma:exponentially_smallprob_height} and duality again shows that $M_t/t < 1/10d$ with exponentially small probability in $t.$ A simple argument by Borel Cantelli shows that $\Prob(M_t/t < 1/10d \text{ i.o.}) = 0,$ so that $\lim \frac{M_t}{t} \geq 1/10d$ almost surely.
\end{proof}

\section{Lower bounds on surface height fluctuation}

\label{sec:lower_bound}
We will combine our previous results on the properties of the dual RSOS process and the equality in distribution to obtain results on the fluctuation of surface height in one dimension. 

A similar result to the log lower bound is established in \cite{penrose2008growth} for the ballistic deposition model. The idea in \cite{penrose2008growth}, which we follow closely, is inspired by a paper of Pemantle and Peres on log lower bounds for planar first passage percolation, assuming exponential weights \cite{pemantle1994planar}. Proving fluctuation lower bounds on FPP-related problems is an area of active interest; see the recent article of Bates and Chatterjee \cite{bates2020fluctuation} as an example.

\begin{theorem}
Let $f_0(t)$ denote the height of an RSOS surface at zero, at time $t,$ from zero initial condition. Then
\[
\liminf_{t \rightarrow \infty} \frac{\Var(f_0(t)) }{\log t} > 0.
\]
\label{thm:loglowerbound}
\end{theorem}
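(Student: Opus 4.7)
The plan is to use the distributional equality $f(t,0) \stackrel{(d)}{=} M_t$ from Section \ref{sec:dual_rep} to reduce to a variance lower bound on $M_t = \min_{y \in \bb{Z}} f^D(t,y)$, and then closely mimic the strategy of Penrose \cite{penrose2008growth} for ballistic deposition, itself adapted from Pemantle and Peres for planar first passage percolation \cite{pemantle1994planar}. A first reduction passes from the variance of $M_t$ to that of the hitting times $T(u) = \inf\{t : M_t \geq u\}$: since $\{M_t \geq u\} = \{T(u) \leq t\}$ and both $M_t/t \to \rho^{-1}$ and $T(u)/u \to \rho$ almost surely by Corollary \ref{cor:duallineargrowthrate}, a CDF comparison using the monotonicity and integer-valuedness of the processes shows that $\Var M_t \geq c \log t$ is equivalent, up to constants depending on $\rho$, to $\Var T(u) \geq c' \log u$ for $u = \lfloor \rho^{-1} t \rfloor$.

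The heart of the argument is then the log lower bound on $\Var T(u)$. Writing $T(u) = \sum_{k=1}^u \tau_k$ with $\tau_k = T(k) - T(k-1)$, let $\cl{G}_k$ be the $\sigma$-algebra generated by the Poisson clocks up to the stopping time $T(k)$, and set $D_k = \E[T(u) \mid \cl{G}_k] - \E[T(u) \mid \cl{G}_{k-1}]$. Martingale orthogonality gives $\Var T(u) = \sum_k \E D_k^2$, and the goal becomes showing $\E D_k^2 \geq c/(u-k+1)$, which telescopes to $c \log u$. Here I would use a coupling in the spirit of Lemma \ref{lemma:dualrsossubconvolutive}: conditional on $\cl{G}_{k-1}$, the remaining hitting time $T(u) - T(k-1)$ dominates an independent copy of $T(u - k + 1)$ and therefore has conditional variance at least of order $u - k + 1$; the additional information in $\cl{G}_k$ reveals the location and Poisson neighborhood of the $k$-th advance of the minimum, and in one dimension this location is confined to a bounded window near the origin, so a single Poisson clock in that window should carry a constant amount of entropy about the future evolution.

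The main obstacle is making the last claim quantitative: given $\cl{G}_{k-1}$, one must exhibit two $\cl{G}_k$-measurable scenarios of constant conditional probability on which the conditional expectation $\E[T(u) - T(k) \mid \cl{G}_k]$ differs by an amount whose square is of order $1/(u-k+1)$. Realizing this requires first localizing the minimizer of $f^D(T(k-1), \cdot)$ to a bounded window, using the V-shape of the dual initial condition and the one-dimensional geometry, and then tracking how a small perturbation of the Poisson clocks in that window propagates through the RSOS update rule to affect subsequent layer-advance times. This geometric analysis is precisely where Penrose's argument leans on one-dimensionality and the specific BD update rule; adapting it to the RSOS rule, which enforces a flatness constraint rather than a max, is the substantive technical step, while the rest of the argument---the CDF reduction, the martingale decomposition, and the subadditive coupling---transfers from the BD case essentially verbatim.
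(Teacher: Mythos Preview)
Your reduction to $M_t$ and the hitting times $T(u)$ is the right starting point, but the martingale decomposition along the filtration $(\cl{G}_k)$ that you propose is \emph{not} how Penrose's argument works, and it is not what the paper does either. The paper follows Penrose essentially verbatim: it conditions on the $\sigma$-algebra $\cl{F}$ generated by the \emph{spatial locations} of all interface-accepted updates (not on the time filtration). The memoryless property then makes $T(u)$, conditional on $\cl{F}$, an explicit sum $\sum_{j=1}^{A(u)} e_j$ of independent exponentials with $e_j$ of rate $Y_{j-1}$, the interface width after $j-1$ interface-accepted updates. In one dimension the dual interface grows linearly, so $Y_j \sim (2\gamma j)^{1/2}$ and $A(u)\sim c u^2$, giving $\Var(T(u)\mid\cl{F}) = \sum_j Y_{j-1}^{-2} \sim \gamma^{-1}\log u$. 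A Berry--Esseen bound shows the conditional law of $T(u)$ is close to normal with this variance, hence anti-concentrated on scale $\sqrt{\log u}$; the transfer back to $M_t$ uses the minimal-path representation (Prop.~\ref{prop:rsosminchar}) together with Lemma~\ref{lemma:exponentially_smallprob_height} to compare $M(t)$ with $M(t - c\sqrt{\log t})$. This is where the RSOS-specific input enters, not in any perturbation analysis of the update rule.

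Your Doob-martingale route, as written, has a real gap. The target $\E D_k^2 \geq c/(u-k+1)$ is not supported by the heuristic you give: stochastic domination of $T(u)-T(k-1)$ over an independent copy of $T(u-k+1)$ does \emph{not} yield a conditional variance lower bound (domination says nothing about spread), and even granting $\Var(T(u)\mid\cl{G}_{k-1}) \gtrsim u-k+1$, nothing forces the single increment $D_k$ to capture a $1/(u-k+1)$ fraction of it. Your fallback---exhibiting two $\cl{G}_k$-scenarios on which $\E[T(u)\mid\cl{G}_k]$ differs by order $(u-k+1)^{-1/2}$---is asserted rather than argued, and there is no visible reason why a bounded perturbation of the Poisson clocks near the current minimizer should produce exactly that scaling in the future conditional mean. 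The conditional-exponential structure that the paper uses is precisely the device that bypasses this difficulty; it is the idea your proposal is missing.
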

Because of the equality in distribution of RSOS and the minimum of the dual RSOS process, we can work with $M_t$ instead of $f_0(t).$ Thus, throughout this section let us focus on the dual RSOS process. Firstly, define the \textit{(dual) RSOS interface} at time $t$ as the set of locations in $\bb{Z}^d$ which have an accepted update up to time $t$. To be clear, if $f^D(t,x)$ denotes the dual RSOS height at time $t$, the RSOS interface is the set of all locations $x$ such that $f^D(t,x) > \norm{x}_1.$ Due to the initial condition of the dual RSOS process, in one dimension this set is almost surely a bounded interval.

Call an update \textit{interface-accepted} if the location of the update belongs to the RSOS interface at the time of update. In contrast to Definition \ref{def:acceptedupdates}, these updates may not affect the height of the process. Define $\cl{F}$ as the sigma algebra generated by the locations in $\bb{Z}$ of all interface-accepted updates. With $T(u)$ as the first hitting time of height $u$, let $A(u)$ denote the number of interface-accepted updates up to this time $T(u)$; in general let $N_t$ denote the number of interface-accepted updates until time $t$ so that $A(u) = N_{T(u)}.$ Enumerate the arrival times of the interface-accepted updates as $\tau_1,\tau_2,\dots.$ 

In one dimension, asymptotics for these quantities may be established. Let $I_t$ be the length of the RSOS interface as an interval at time $t,$ and let $Y_n = I_{\tau_n}$ be the width of the interface after $n$ interface-accepted updates. It is not hard to see that the right and left edges of the interface grow at rate $1$, due to the RSOS condition and the well initial configuration. As a result, $I_t$ is a renewal process with Exponential rate $2$ interarrival times. By the strong law of large numbers for renewal processes (see e.g. Thm. 4.4.1 of \cite{durrett2019probability}) there is a positive constant $\gamma$ such that
\begin{equation}
\label{eq:I_t_slln}
\lim_{t\rightarrow \infty}(I_t/t) = \gamma, \quad \text{ a.s. }
\end{equation}

Next, note the following asymptotic scalings. As in \cite{penrose2008growth}, Eq. (4.12), we have
\begin{equation}
    \lim_{t \rightarrow \infty} \frac{N_t}{\int_0^t I_u du} = 1 \text{\ a.s.}, 
\end{equation} 
as $N_t$ is a Poisson process with intensity $I_t$, changing over time. Intuitively, the change in $I_t$ is independent of $N_t$, as it depends on update times immediately to the left and right of the RSOS interface. Combining the previous two observations yields
\begin{equation}
\label{eq:number_arrivals_scaling}
    \lim_{t \rightarrow \infty} \frac{N_t}{t^2} = \frac{\gamma}{2}. 
\end{equation}
By definition $N_{\tau_j} = j,$ so combined with Equations \eqref{eq:I_t_slln}, \eqref{eq:number_arrivals_scaling}, $Y_j \sim \gamma\tau_j \sim (2\gamma j)^{1/2}$ as $j \rightarrow \infty$ almost surely. Similarly,
\[
A(u) = N_{T(u)} \sim (\gamma/2)T(u)^2 \sim (\gamma\rho^2/2)u^2,
\]
which follows by the almost sure growth rate of the hitting time from Corollary \ref{cor:duallineargrowthrate}. $\rho$ is the constant in the same corollary.

To proceed, the crucial observation - as in \cite{pemantle1994planar} - is the following.
Conditional on $\cl{F}$ the distribution of $T(u)$ is the same as that of the sum of $A(u)$ independent exponentials with the $j$th exponential having mean $Y_{j-1}^{-1}.$ This follows by the memoryless property of the exponential and the fact that the minimum of $n$ rate $1$ exponentials is exponential with rate $n.$ Using a quantitative CLT, it can be shown that the conditional distribution of $T(u)$ is close to normal with variance $\sim \log u.$ These details overlap with those in the proof of Theorem 2.2 of \cite{penrose2008growth}. Only the final step is significantly different and uses properties of the RSOS process.

\begin{proof}[Proof of Theorem \ref{thm:loglowerbound}]

By the Berry-Esseen Theorem, there exists a constant $C_{BE} > 0$ such that if $X_1,\dots,X_k$ are independent mean zero, finite third moment random variables with $W := \sum_{i=1}^k X_i$ having variance $1,$ then
\[
\sup_{x \in \bR} \left| \Prob(W \leq x) - \Phi(x) \right| \leq C_{BE} \sum_{i=1}^k \E[|X_i|^3].
\]

Let $e_i := \tau_i - \tau_{i-1}$ and let $\theta_u$ denote the third moment sum 
\[
\theta_u := \sum_{i=1}^{A(u)} \E[|e_i - \E[e_i | \cl{F}]|^3 | \cl{F}].
\]
By properties of the exponential, this can be calculated as $\theta_u = (12e^{-1} - 2)\sum_{i=1}^{A(u)} Y^{-3}_{i-1}.$ This is noted in Equation 4.17 of \cite{penrose2008growth}. 

Applying the Berry Esseen theorem,
\[
\sup_{x \in \bR} \left| \Prob\left[ \frac{T(u) - \mu_u}{\sigma_u} \leq x | \cl{F} \right] - \Phi(x) \right| \leq \frac{C_{BE}\theta_u}{\sigma_u^3},
\]
with $\sigma_u^2 = \Var(T(u)| \cl{F}) = \sum_{j=1}^{A(u)}Y_{j-1}^{-2},$ and $\mu_u = \E[T(u) | \cl{F}].$ The scalings determined above for $A(u)$ and $Y_j$ imply the following:
\begin{align*}
    \sigma_u^2 & \sim (2\gamma)^{-1}\log A(u) \sim \gamma^{-1} \log u \\
    \mu_u & \sim (2\gamma)^{-1/2}\sqrt{A(u)} \sim \rho u/2.
\end{align*}    
By a similar argument, $\theta_u$ is finite almost surely as $\sum_{i=1}^\infty Y_{i-1}^{-3}$ converges to a constant. Now because $\theta_u$ is finite and $\sigma_u^3 = O((\log u)^{3/2}) \uparrow \infty,$ there exists $u_0$ such that for every $u \geq u_0,$
\begin{align}
\label{eq:goodsetprobbound}
    & \Prob(A_u) \leq 0.01, \text{ where }  \\
    A_u & = \set{\frac{C_{BE}\theta_u}{\sigma_u^3} > 0.01} \cup \set{\frac{\sigma_u^2}{\gamma^{-1}\log u} \in (0.5,2)^c }. 
\label{eq:berryesseengoodset}
\end{align}

Combining the Berry-Esseen bound and the definition of $A_u$, 
\begin{equation}
\Prob(T(u) \leq y + 0.2(\gamma^{-1}\log u)^{1/2}) - \Prob(T(u) \leq y) < 1/4,
\label{eq:loglowerbound_proof_I}    
\end{equation}

as in Eq. 4.19 of \cite{penrose2008growth}. Now, let $\nu(t)$ be a median of $M_t$ for $t \geq 0.$ Then $\Prob(T(\nu(t)) > t) = \Prob(M_t < \nu(t)) \leq 1/2$, which shows that $\Prob(T(\nu(t)) \leq t) \geq 1/2.$ From Eq. \eqref{eq:loglowerbound_proof_I}, for $t$ large enough such that $\nu(t) > u_0$, notice that 
\[
\Prob(T(\nu(t)) \leq t - 0.2 \sqrt{\gamma^{-1}\log \nu(t)}) \geq 1/4.
\]
Because of the almost sure convergence $M_t/t \rightarrow \rho^{-1}$, $\nu(t)/t \rightarrow \rho^{-1}$ deterministically, and so $\log \nu(t)$ behaves asymptotically like $\log t$ as $t \rightarrow \infty.$ The event that $T(\nu(t)) \leq t - 0.2 \sqrt{\gamma^{-1}\log \nu(t)}$ is equivalent to  $M_{t - 0.2 \sqrt{\gamma^{-1}\log \nu(t)}} \geq \nu(t)$ . \\

Our next step will be to show that 
\begin{equation}
\label{eq:theorem_8.1_subgoal}
    \Prob\left(M_t \geq \nu(t) + C\sqrt{\log t} \right) \geq 1/8.
\end{equation} 

If so, the definition of the median gives the bound $\Prob(M_t \leq \nu(t)) \geq 1/2$, while Eq. \eqref{eq:theorem_8.1_subgoal} shows $\Prob \left(M_t \geq \nu(t) + C\sqrt{\log t} \right) \geq 1/8$. Thus the fluctuations of $M_t$ are at least of log order, for $t$ large enough. Because $M_t$ and the RSOS height at zero have the same law, this gives the desired logarithmic lower bound on the variance. \\

Towards this goal, notice we may restrict the minimum in the definition of $M_t$ to a much smaller interval. If $[l_t,r_t]$ denotes the RSOS interface, notice that $f^D(t, l_t) \leq f^D(t, y)$ for all integers $y \leq l_t$. Similarly, $f^D(t, r_t) \leq f^D(t, z)$ for all integers $z \geq r_t$. Thus, $M_t := \min_{x \in \bb{Z}^d} f^D(t,x) =\min_{x \in [l_t,r_t]} f^D(t,x).$ As noted earlier, the left and right endpoints grow at rate $1$, so that $l_t,r_t$ are distributed as Poisson with mean $t$. Therefore, for $t$ large enough, on a set $A_1$ of exponentially small probability, $[l_t,r_t] \not \subseteq [-t^2,t^2]$. On the same set, 
\[
M_t \neq \min_{x \in [-t^2,t^2]} f^D(t,x).
\]

Let us use $s$ as shorthand for the time quantity $t -0.2\sqrt{\gamma^{-1}\log \nu(t)}$. Below, let $\Gamma_{x,y}$ denote all lattice paths from starting at $(s,y)$ and ending at $(t,x)$. Then on $A_1^c$, by Proposition \ref{prop:rsosminchar} and the following comment,
\begin{align*}
    M_t & =  \min_{x \in [-t^2,t^2]} f^D(t,x) \\
    & = \min_{x \in [-t^2,t^2]} \left( \min_{y \in \bb{Z}^d} \set{f^D(s,y) +\min_{\gamma \in \Gamma_{x,y}}W(\gamma) } \right) \\
    & \geq \min_{x \in [-t^2,t^2]} \set{M_s + \min_{y \in \bb{Z}^d} \min_{\gamma \in \Gamma_{x,y}} W(\gamma)} \\
    & = M_s + \min_{x \in [-t^2,t^2]} Z(x)
\end{align*}
where we have used the simple inequality $\inf_{i \in I} (a_i +b_i) \geq \inf_{i \in I} a_i + \inf_{i \in I} b_i$. Here, $Z(x) = \min_{y \in \bb{Z}^d} \min_{\gamma \in \Gamma_{x,y}} W(\gamma)$. By the minimum path formulation, $Z(x)$ is equal in distribution to an RSOS surface growth process, measured at time $t-s = 0.2\sqrt{\gamma^{-1}\log \nu(t)}$. Using a union bound with Lemma \ref{lemma:exponentially_smallprob_height}, there is a set $A_2$ with exponentially small probability on which $\min_{x \in [-t^2,t^2]} Z(x) \leq \frac{0.2}{10d}\sqrt{\gamma^{-1}\log \nu(t)}.$ 

Let $A_3 = \set{T(\nu(t)) \leq t - 0.2 \sqrt{\gamma^{-1}\log \nu(t)}}$. For $t$ large enough so that $\log \nu(t) \geq \frac{1}{2}\log{t}$, on $A_1^c \cap A_2^c \cap A_3$,  $M_t \geq \nu(t) + C\sqrt{\log t}$. The probability of $A_3$ is bounded below by $1/4$, and since $\Prob(A_1^c \cap A_2^c) \rightarrow 1$, we conclude that for some $t$ large enough, 
\[
\Prob(M_t \geq \nu(t) + C\sqrt{\log t}) \geq 1/8, 
\]
as desired, which is exactly Eq. \eqref{eq:theorem_8.1_subgoal}.

\end{proof}

It seems possible to extend this to result to higher dimensions as well, although one would get a constant order fluctuation lower bound. The proof would require an understanding of the growth of the dual RSOS interface in higher dimension, which has the same dynamics as the Eden growth model. Limit shape results are well-known for the Eden model, which may be enough to extend the lower bound to all dimensions. The following simple observation makes a start. Ideally, we would like to prove that the liminf of the variance is bounded away from zero, as in Theorem 3.2 of \cite{penrose2008growth}, but the same proof technique fails for RSOS. The basic difficulty is that an update may not change the height of the surface.

\begin{proposition}
Let $f(t,x)$ denote the surface height of a RSOS surface growth model on $\mathbb{Z}^d$. Then
\[
\limsup_{t \rightarrow \infty} \Var f(t,x) > 0.
\]
\end{proposition}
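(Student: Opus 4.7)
The plan is to exploit the fact that $f(t,x)$ is $\bb{Z}$-valued while $\E f(t,x)$ is a continuous, unbounded function of $t$. At any time with $\E f(t,x) = n + \tfrac{1}{2}$ for some integer $n$, the integer-valued variable $f(t,x)$ differs from its mean by at least $1/2$ on every realization, so $\Var f(t,x) \geq 1/4$; exhibiting such times along a sequence tending to infinity then yields the claim.

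First I would establish that $t \mapsto \E f(t,x)$ is continuous. Because $f(\cdot,x)$ is non-decreasing in $t$ and only changes (by $+1$) at clock rings at location $x$, the pathwise bound
\[
0 \leq f(t,x) - f(s,x) \leq |\cl{U}_x \cap (s,t]|
\]
holds for all $s \leq t$. Taking expectations and using that $|\cl{U}_x \cap (s,t]|$ is $\Poisson(t-s)$ yields the $1$-Lipschitz estimate $0 \leq \E f(t,x) - \E f(s,x) \leq t - s$. Combined with Corollary \ref{cor:lbev}, which states $\E f(t,x) \to \infty$, the intermediate value theorem produces a sequence $t_n \to \infty$ with $\E f(t_n,x) = n + \tfrac{1}{2}$.

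At each such $t_n$, since $f(t_n,x) \in \bb{Z}$ we have $|f(t_n,x) - \E f(t_n,x)| \geq \tfrac{1}{2}$ almost surely, whence
\[
\Var f(t_n,x) = \E\bigl(f(t_n,x) - \E f(t_n,x)\bigr)^{2} \geq \tfrac{1}{4}.
\]
This forces $\limsup_{t \to \infty} \Var f(t,x) \geq 1/4 > 0$. I do not anticipate any real obstacle: the only nontrivial input, the divergence of the mean, is already supplied by Corollary \ref{cor:lbev}, and the remaining ingredients (Lipschitz continuity of $\E f(\cdot,x)$ and the integer-vs-half-integer parity comparison) are entirely elementary.
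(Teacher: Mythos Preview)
Your argument is correct and considerably more direct than the paper's. The paper proceeds by contradiction: assuming $\Var f(t-1,x)$ is very small, translation invariance forces the surface to be nearly flat near the origin at time $t-1$ with high probability; conditioning on this flat event, the height at time $t$ then depends on whether a clock rings at $x$ during $[t-1,t)$, and a case analysis on the conditional mean yields $\Var(f(t,x)\mid \cl{F}_{t-1})$ bounded below on that event, hence $\Var f(t,x)$ is bounded away from zero. Your route bypasses all of this by using only two soft facts --- integer-valuedness of $f$ and Lipschitz continuity plus divergence of $t\mapsto \E f(t,x)$ --- together with the elementary observation that an integer-valued random variable with half-integer mean has variance at least $1/4$. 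Your argument is shorter, uses nothing about the RSOS dynamics beyond monotonicity and Corollary~\ref{cor:lbev}, and delivers the explicit constant $1/4$; the paper's argument, while heavier, has the mild advantage of being self-contained (it does not invoke the linear growth of $\E f$) and illustrates how the Poisson randomness in any unit time window regenerates fluctuations.
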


\begin{proof}

Suppose for the sake of contradiction that for some time $t,$ $\Var(f(t-1,x)) < \e$ for some $\e > 0$ small enough. By translation invariance of the law, this implies that $\Var(f(t-1,y)) < \e$ for all $y$. Thus, $\E[(f(t-1,y) - f(t-1,z))^2] \leq 4\e$ for any $y,z \in \bb{Z}^d$. 

To see this,
\begin{align*}
    & \  \E[(f(t-1,y) - f(t-1,z))^2] \\
    = & \  \E[\left(f(t-1,y) - \E[f(t-1,0)] - (f(t-1,z) - \E[f(t-1,0)])\right)^2] \\
    = & \Var(f(t-1,y)) + \Var(f(t-1,z)) - 2\operatorname{Cov}(f(t-1,y), f(t-1,z)) \\
    \leq & \  4\e,
\end{align*}
using the variance bound, translation invariance, and Cauchy-Schwarz.

Thus
\begin{equation}
\label{eq:constantvariance_l2bound}
\E[\sum_{y \sim 0} (f(t-1,y) - f(t-1,0))^2] \leq 8d\e.
\end{equation}
Let $A$ denote the event that at time $t-1$, the height of the process at zero equals the height of that of its neighbors. Notice the integrand on the left hand side of  \eqref{eq:constantvariance_l2bound} is integer-valued. Thus $\E[\sum_{y \sim 0} (f(t-1,y) - f(t-1,0))^2] \geq \Prob(A^c)$ and so $\Prob(A) \geq 1 - 8d\e$.

Now, note that on the event $A,$ the height of the process satisfies $f(t,0) = f(t-1,0)$ if and only if there is no update that falls over location zero between times $[t,t-1)$; meanwhile $f(t,0) > f(t-1,0)$ if and only if there is at least one update that falls over location zero between these times. These two possibilities happen with constant probabilities $p,1-p$ respectively. By properties of Poisson point processes, the update times before time $t-1$ are independent of those in the time interval $[t-1,t).$ As a result, on the event $A,$  $\Prob(f(t,0) = f(t-1,0) | \cl{F}_{t-1}) = p$ while $\Prob(f(t,0) > f(t-1,0) | \cl{F}_{t-1}) = 1-p$, where $\cl{F}_{t-1}$ is the sigma algebra generated by the Poisson noise up until time $t-1.$

We may use this to bound the conditional variance from below. For shorthand, write $f_t := f(t,0)$ for all $t.$ Since $f_t$ is monotone increasing and integer valued,
\begin{align*}
    \Var( f_t | \cl{F}_{t-1}) & = \E[(f_t - \E[ f_t|\cl{F}_{t-1}])^2\mathbf{1}_{f_t = f_{t-1}}| \cl{F}_{t-1}] \\
    & + \E[(f_t - \E[f_t|\cl{F}_{t-1}])^2\mathbf{1}_{f_t \geq f_{t-1} + 1}| \cl{F}_{t-1}].
\end{align*}
Now if $\E[f_t | \cl{F}_{t-1}] \geq f_{t-1} + \frac{1}{2}$ the above decomposition shows that $\Var(f_t|\cl{F}_{t-1}) \geq \frac{1}{4}\Prob(f_t = f_{t-1}|\cl{F}_{t-1})$ because the first term above is bounded by the same quantity. If instead $\E [f_t | \cl{F}_{t-1}] < f_{t-1} + \frac{1}{2}$, then the second term above is bounded below by $\frac{1}{4}\Prob(f_t \geq f_{t-1} + 1|\cl{F}_{t-1})$. Combining these two facts,
\begin{align*}
    \Var(f_t|\cl{F}_{t-1})\mathbf{1}_A & \geq \frac{1}{4}\min\left(\Prob(f_t = f_{t-1}|\cl{F}_{t-1}),\Prob(f_t \geq f_{t-1} + 1|\cl{F}_{t-1})\right)\mathbf{1}_A \\
    & = \frac{1}{4}\min\left(\Prob(f_t = f_{t-1}|\cl{F}_{t-1})\mathbf{1}_A,\Prob(f_t \geq f_{t-1} + 1|\cl{F}_{t-1})\mathbf{1}_A\right) \\
    & = \frac{1}{4}\min(p,1-p)\mathbf{1}_A.
\end{align*}

Thus, $\E[\Var(f_t|\cl{F}_{t-1})] \geq \frac{1}{4}\min(p,1-p)\Prob(A)$, so $\Var(f(t,0))$ is also bounded away from zero by the law of total variance. As a result, whenever $\Var(f(t-1,x))$ is small enough, the variance at time $t$ is bounded away from zero, establishing that the limsup must be positive.

\end{proof}

\section{Discussion}
\label{sec:discussion}

Since the RSOS model appears to be non-integrable, probing the fine structure of the model is difficult. This paper makes a start by offering a characterization of the RSOS surface height as the minimal weight path on the disordered Poisson lattice, as well as a characterization in terms of a dual process. These observations are used to obtain linear fluctuation upper bounds for the model in all dimensions, and a log lower bound in one dimension. 

Several extensions of the RSOS model are also of interest. In the physics literature, variants have been studied where adjacent heights are bounded by $k > 1$. The methods in this paper should extend to these models as well. For example, the minimum weight characterization of Prop. \ref{prop:rsosminchar} holds if we assign the path weight $W_k(\gamma) = |\set{\text{vertical } u \in \gamma}| + k|\set{\text{horizontal } u \in \gamma}|$, where a horizontal update is one such that the path $\gamma$ moves to another location upon passing through, and a vertical update is an update such that the path does not switch locations. The proof is obtained by a comparison to the surface growth model with update rule
\[
f(t,x) = \min( f(t^-,x) + 1 , \min(\set{k + f(t^-,x+n)}_{n \in \cl{N}})).
\]
Following our remark on the interpretation of RSOS and ballistic deposition as analogues of first and last passage percolation on this random geometry, a more general class of directed polymer models on the random Poisson lattice might be of interest. That is, one can place a Gibbs measure on weights proportional to $\exp(\beta W(\gamma)),$ where intuitively $\beta = \pm \infty$ correspond to the restricted solid on solid model and ballistic deposition. The general $\beta$ case bears a strong resemblance to a model introduced in \cite{cannizzaro2020brownian} to understand the large scale limit of ballistic deposition. These could be objects of future study.

In general, tools to understand surface height fluctuations for continuous-time surface growth models are still under-developed. A natural goal for future research is to extend \cite{chatterjee2014superconcentration} to the continuous-time, asynchronous update setting, though a superconcentration theory in this setting seems to require more assumptions on the surface growth dynamics. Beyond understanding surface height fluctuations, studying scaling limits of the model is also of interest. For example, in recent work of \cite{comets2022scaling}, a variant of ballistic deposition was shown to have a (non-KPZ) scaling limit in terms of a continuum LPP; by analogy, one could investigate the scaling limit of the RSOS model in terms of first passage percolation.

\section{Acknowledgments}

The author thanks Sourav Chatterjee for suggesting this problem as well as helpful conversation, guidance,
and encouragement. The author also thanks Erik Bates, Will Hartog, Kevin Guo, and two anonymous referees for helpful comments. The author acknowledges support from the NSF Graduate Research Fellowship Program under Grant DGE-1656518.

\bibliographystyle{alpha}
\bibliography{main}

\end{document}